\newcommand*\bdot{\mathpalette\bdot@{.65}}
\newcommand*\bdot@[2]{\mathbin{\vcenter{\hbox{\scalebox{#2}{$\m@th#1\bullet$}}}}}
\def\R{\mathbb R}
\def\N{\mathbb N}
\def\C{\mathbb C}
\def\grad{\nabla}
\def\G{\mathscr G}
\def\eps{\varepsilon}
\def\bx{\mathbf{x}}
\def\by{\mathbf{y}}
\def\supp{\mathop{\mathrm{supp}}}
\renewcommand{\t}{\text}
\def\defn{\coloneqq}
\newtheoremstyle{mine}{1.5\baselineskip}{1.5\baselineskip}{}{}{\bfseries}{:}{.5em}{}
\theoremstyle{mine}
\newtheorem{definition}{Definition}[section]
\newtheorem{Prop}[definition]{Proposition}
\newtheorem{thm}[definition]{Theorem}
\newtheorem{coro}[definition]{Corollary}
\newtheorem{lemma}[definition]{Lemma}
\newenvironment{ex}
{\pushQED{\qed}\examplex}
{\popQED\endexamplex}
\newtheorem{remark}[definition]{Remark}
\definecolor{codegreen}{rgb}{0,0.6,0}
\definecolor{codegray}{rgb}{0.5,0.5,0.5}
\definecolor{codepurple}{rgb}{0.58,0,0.82}
\definecolor{backcolour}{rgb}{0.95,0.95,0.92}
\lstdefinestyle{mystyle}{
	backgroundcolor=\color{backcolour},   
	commentstyle=\color{codegreen},
	keywordstyle=\color{magenta},
	numberstyle=\tiny\color{codegray},
	stringstyle=\color{codepurple},
	basicstyle=\ttfamily\footnotesize,
	breakatwhitespace=false,         
	breaklines=true,                 
	captionpos=b,                    
	keepspaces=true,                 
	numbers=left,                    
	numbersep=5pt,                  
	showspaces=false,                
	showstringspaces=false,
	showtabs=false,                  
	tabsize=2
}
\pgfplotsset{compat=1.17}
\def\D{\mathcal{D}}
\renewcommand{\norm}[1]{\left\lVert#1\right\rVert}
\DeclareFontFamily{U}{mathx}{}
\DeclareFontShape{U}{mathx}{m}{n}{<-> mathx10}{}
\DeclareSymbolFont{mathx}{U}{mathx}{m}{n}
\DeclareMathAccent{\widehat}{0}{mathx}{"70}
\DeclareMathAccent{\widecheck}{0}{mathx}{"71}
\newcommand{\dom}{\Omega}
\newcommand{\Ds}{\overline{\D}_{\text{s}}}
\newcommand{\Da}{\overline{\D}_{\text{a}}}
\newcommand{\lp}{\left(}
\newcommand{\rp}{\right)}
\newcommand{\ls}{\left[}
\newcommand{\rs}{\right]}
\newcommand{\qqquad}{\qquad\qquad}
\newcommand{\qqqquad}{\qqquad\qqquad}
\title{Nonlocal Differential Operators with Integrable, Nonsymmetric Kernels: Part I--Operator Theoretic Properties}
\author{Mikil Foss, Michael Pieper}
\begin{document}
	\maketitle
	\tableofcontents
	
	\begin{abstract}
		Recent decades have provided a host of examples and applications motivating the study of nonlocal differential operators. We discuss a class of such operators acting on bounded domains, focusing on those with integrable kernels having compact support. Notably, we make no explicit symmetry assumptions on the kernel and discuss some implications of this decision. We establish a nonlocal-to-local convergence result, showing that these operators coincide with the classical derivative as the nonlocality vanishes. We also provide a new integration by parts result, a characterization of the compactness of these nonlocal operators, an implication for the nonlocal Poincar\'e inequality, and a variety of examples. This work establishes several key results needed to analyze nonlocal variational problems, given in Part II~\cite{PartII}. We hope that this paper can serve as a relatively gentle introduction to the field of nonlocal modeling.
	\end{abstract}
	
	\section{Introduction}\label{sect: intro}
	Irregular behavior and multi-scale structures are ubiquitous in real-world models. Classical models that rely on differential operators implicitly require some regularity for their solutions. Over the past couple of decades, there has been a surge of interest in developing models involving nonlocal operators---in particular, operators with a convolution-like structure. Their general form for this paper is
	\begin{equation}\label{Eq:Prototype}
		\D_\delta u(x)=\int_{\R^d}[u(x+z)-u(x)]\mu_\delta(z)\dif z.
	\end{equation}
	Long-range interactions are captured by the kernel $\mu_\delta:\R^d\to\R$, which we assume to be integrable. Like a classical differential operator, with appropriate structural assumptions on $\mu_\delta$, the operator $\D_\delta$ will measure properties of function $u:\R^d\to\R$ related to its rate of change. Unlike a differential operator, however, $\D_\delta u(x)$ is well-defined even if $u$ fails to be differentiable at $x\in\R^d$. The \emph{horizon parameter} $\delta>0$ identifies the support of $\mu_\delta$, and determines the spatial scale of relative changes in $u$ captured by $\D_\delta u$. This nonlocal nature of $\D_\delta$ provides a natural mechanism for modeling phenomenon exhibiting irregular phenomena, dependent on long-range interactions, or involving multi-scale physics.
	
	Due to this flexibility, nonlocal operators have been successfully employed to model a rapidly expanding range of fields. These include swarming~\cite{mogilner_non-local_1999, gal2013global}, change in public sentiment~\cite{shomberg2021modeling}, composite-based aircraft (as Boeing's 787 Dreamliner~\cite{hu2012peridynamic}), fracture mechanics~\cite{ha2011characteristics}, image processing~\cite{images2,images1}, and phase separation~\cite{phases}. For a comprehensive introduction to nonlocal modeling, we refer to Quian Du's book~\cite{du_nonlocal_2019}.
	
	Some additional motivation for considering operators like $\D_\delta$ is provided in Section~\ref{sect: motiv}. We note that second-order nonlocal derivatives have been more extensively studied. Although we focus on operators that can be viewed as first-order nonlocal derivatives, many of the following results do not require this interpretation. In particular, except where explicitly stated, we do not assume that $\mu_\delta$ satisfies the hypotheses of Theorem~\ref{thm: convergence to classical derivative}. Hence, many of the tools we present are relevant to nonlocal operators that do not converge to any derivative, or those that converge to a higher- or mixed-order differential operator. 
	
	Because we are motivated by analyzing first-order nonlocal derivatives, we now comment on some of the existing work on the subject. Two-point nonlocal gradients have been studied in~\cite{gunzburger_nonlocal_2010, Du_Gunzberger}, and have been applied extensively in image processing~\cite{gilboa_nonlocal_2009}. However, we focus on weighted, one-point nonlocal operators. Operators of a similar structure have been analyzed in~\cite{mengesha_localization_2015, shankar_nonlocal_2016, mengesha_characterization_2016, du_analysis_2017, shieh_new_2018, delia_helmholtz-hodge_2020, delia_towards_2021, bellido_non-local_2023, haar_new_2022, delia_connections_2022,cueto_variational_2023}. Our work is distinct in that (1) we do not assume that the kernel $\mu_\delta$ is antisymmetric, (2) the kernel $\mu_\delta$ is integrable with support in the ball $\overline{B_\delta(0)}$, and (3) we work on arbitrary bounded domains. See Section~\ref{sect: symm} for further discussion on symmetry of $\mu_\delta$. Notably, the integrability requirement rules out the strongly singular kernels used to define fractional derivatives and ensures that $\D_\delta$ is a bounded operator on all of $L^p$. Nonlocal operators of this sort are often cited as an alternative to fractional derivatives, since they can be applied on arbitrary bounded domains as opposed to the whole space $\R^d$. In fact, nonlocal derivatives can be seen as a bridge between the classical and fractional calculus. In appropriate conditions, the fractional case can be recovered in the limit $\delta\to\infty$ and the classical derivative is seen as the limit $\delta\to0$. These ideas are developed in~\cite{delia_fractional_2013, tian_asymptotically_2016, delia_towards_2021, delia_connections_2022}, and a concise comparison of nonlocal, fractional, and local gradients is given in~\cite{cueto_variational_2023}.

	One goal of nonlocal modeling is to extend some familiar ideas from the classical, local theory of calculus and differential equations. In the nonlocal setting, there are few restrictions on the domain, as well as low regularity assumptions on the functions we work with. To simplify the presentation, we focus on scalar-valued functions and scalar-valued kernels. However, our results can be easily extended to higher dimensions. This work pulls together some tools that will be put to use in Part II~\cite{PartII}, where we discuss variational problems involving nonlocal differential operators.
	
	\subsection{Contributions}
	The main contributions of this paper are as follows
	\begin{enumerate}
		\item \textit{A New Convergence Result:} As discussed in Section~\ref{sect: motiv}, $\D_\delta u$ can be seen as a nonlocal generalization of a directional derivative. This is made precise by Theorem~\ref{thm: convergence to classical derivative}, which provides a more general form of operator convergence than exists in the literature. We do not impose any symmetry restrictions on the kernel, assume more flexible scaling and concentration hypotheses, and provide explicit estimates on the rate of convergence. Depending on the regularity of $u$, $\D_\delta u$ can be seen as a nonlocal approximation of the distributional, weak, or strong derivative of $u$. Section~\ref{sect: symm} gives some additional context for the significance of Theorem~\ref{thm: convergence to classical derivative}, and some examples are produced in Section~\ref{sect: ex}.
		
		\item \textit{A New Nonlocal Integration By Parts:} Section~\ref{sect: adjoint} establishes the adjoint of the operator $\D_\delta$ when acting on $L^p(\Omega_\delta)$ for $p\in[1,\infty)$. We also give a general expression for integrating by parts when using nonlocal derivatives in Theorem \ref{thm: by parts}. To our knowledge, this is the first such result on bounded domains for these sorts of operators without symmetry assumptions on the kernel. Nonlocal analogs of Green's Identity, the Divergence Theorem, and the familiar adjoint relationship between the gradient and divergence are immediate corollaries of Theorems \ref{thm: adjoint} and \ref{thm: by parts}.
		
		\item \textit{Exploration of Operator Theoretic Properties:} Theorem~\ref{thm: cpt} gives a characterization of the compactness of the operator $\D_\delta$. As a consequence, Corollary~\ref{coro: no poincare} shows that if $\mu_\delta$ is integrable and antisymmetric, then there cannot be a Poincar\'{e}-type inequality for $\D_\delta$. Implications to variational problems will be explored in a future work~\cite{PartII}. Sections~\ref{sec: compact} and~\ref{sect: symm} also call attention to a theme running through the nonlocal literature: nonlocal operators with mean-free kernel have a much larger null space than the classical derivative. This introduces complications in modeling, theoretical development, and numerical implementation. This further demonstrates the need to develop a general theory for nonlocal operators without (anti)symmetry assumptions.
	\end{enumerate}
	
	\subsection{Setup and Notation}\label{sec: setup}
	Throughout, $\mu_\delta$ is assumed to be a scalar-valued function in $L^1(\R^d)$. We typically assume that $\mathop{\mathrm{supp}}(\mu_\delta)\subseteq \overline{B_\delta(0)}$, where $\delta>0$ identifies the horizon, or interaction radius, of the operator $\D_\delta$. In the convergence theorem, \ref{thm: convergence to classical derivative}, the support of $\mu_\delta$ is allowed to be unbounded. In this case, $\delta$ roughly corresponds to the radius of the ball that contains ``most" of the mass of $\mu_\delta$; this notion is made precise where needed. A physical interpretation of the horizon $\delta$ is provided in~\cite{bobaru_meaning_2012, seleson_role_2011}. In the context of image processing, $\delta$ controls the size of the ``patches" or ``neighborhoods"~\cite{buades_non-local_2011}.
	
	We take $d\in\N$ to be the dimension of the ambient space, and $\Omega$ is an open, bounded subset of $\R^d$. Because of the positive interaction radius, boundary conditions for nonlocal problems are imposed on a set of positive measure, so called ``volume constraints" (see for example~\cite{Du_Gunzberger, delia_towards_2021}). Hence, we need a nonlocal analog of the boundary, which we call a \textit{collar}.
	\begin{definition}\label{def: collar}
		Let $\Omega\subseteq\R^d$ be a given open, bounded domain. If $\delta>0$ is fixed without specifying a kernel, then the \textit{collar of width $\delta$} is denoted by $\Gamma_\delta$ and defined by $\Gamma_\delta\defn \left(\bigcup_{x\in\Omega} \overline{B_\delta(x)} \right)\setminus\Omega$. In general, given a specific kernel $\mu_\delta$, one can instead work with a potentially smaller domain:  $\Gamma_\delta\defn\bigcup_{x\in\Omega}\supp(\mu_\delta(\cdot-x))\setminus\Omega$.
	\end{definition} 
	\begin{definition}\label{def: Omega_delta}
		The setting for a nonlocal problem is typically $\Omega_\delta\defn \Omega\cup\Gamma_\delta$. The value of $\D_\delta(x)$ incorporates information within a ball of radius $\delta$. Thus, to ensure it is well-defined throughout $\Omega$, we ''attach'' a collar of width $\delta$ as a nonlocal boundary. Occasionally we will also need to reference an \textit{inner collar} $\Gamma_{-\delta}$, which is a strip of width $\delta$ inside $\Omega$, running along the boundary. Note that there is no necessary connection between the regularity or topological properties of $\Omega$ and $\Omega_\delta$.  A two-dimensional example is shown in Figure~\ref{fig: nonlocal domain}.
	\end{definition}
	
	\begin{figure}[!htbp]
		\centering
		\includegraphics[width=.5\textwidth]{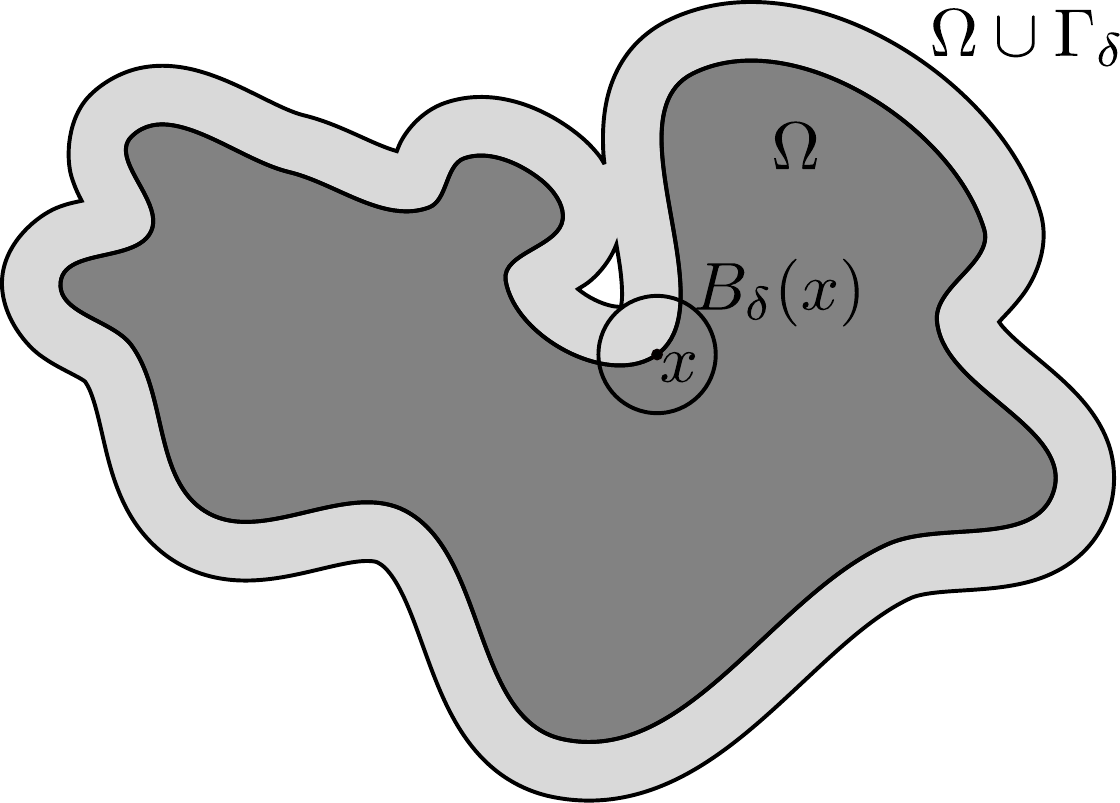}
		\caption{Domain $\Omega$ (dark grey) with collar $\Gamma_\delta$ (light grey).}

		\label{fig: nonlocal domain}
	\end{figure}
	
	\begin{definition}\label{def: inner collar}
		Given $\delta>0$, we set $\Gamma_{-\delta}\defn \set{x\in\Omega: d(x,\partial\Omega),\le \delta}.$ We also write $\Omega_{-\delta}$ to denote the set $\Omega\setminus \Gamma_{-\delta}$, which is the set of all interior points that are at least $\delta$ units away from the boundary.
	\end{definition}
	
	Typically, $p\in[1,\infty]$ denotes an exponent and $p'$ denotes its H\"older conjugate, so that $1/p + 1/p' = 1$. We use $L^p\left(\Omega\right)^m$ to denote vector-valued $L^p$ functions, while $L^p\left(\Omega\right)$ implies that the functions output real numbers. In this work, we focus on scalar-valued functions. Hence, we always assume that $u$ is scalar-valued, unless explicitly stated otherwise. We write $\vec u$ in the few cases where vector fields arise, to emphasize that the output is a vector.
	
	The central subject of this work is the nonlocal directional derivative. Because we only consider integrable kernels, it follows immediately from Young's inequality for convolutions that $\D_\delta u$ is well defined for any $u\in L^p(\Omega_\delta)$.
	
	\begin{definition}
		Pick any $\delta>0$, $p\in[1,\infty]$, and $u\in L^p\left(\Omega_\delta\right)$. For some $\mu_{\delta}\in L^1(\R^d)$, the \textit{nonlocal (directional) derivative} is the operator $\D_\delta: L^p(\Omega_\delta)\to L^p(\Omega)$, given by
		\begin{equation}\label{eqt: define D}
			\D_{\delta} u(x) \defn \int_{\R^d} [u(x+z) - u(x)]\mu_{\delta}(z)\dif z.
		\end{equation}
	\end{definition}
	
	\begin{definition}\label{def: zero on collar}
		In many cases it is useful to work on a closed subspace, $L^p_0\left(\Omega_\delta\right)$, consisting of all functions that are zero on the collar:
		\begin{equation}
			L^p_0\left(\Omega_\delta\right)\defn \set{u\in L^p\left(\Omega_\delta\right): u=0 \text{ a.e. in } \Gamma_\delta}.
		\end{equation}
	\end{definition}
	
	\begin{remark}
		When the domain is clear, we write $L^p_0$ instead of $L^p_0\left(\Omega_\delta\right)$. Note the similarity to the function space $W^{1,p}_0\left(\Omega\right)$, which can roughly be thought of as functions that are zero on the boundary. In our setting, the notion of boundary is replaced with nonlocal analog: the collar, or ``fat boundary."
	\end{remark}
	
	Note that the domain of $\D_\delta u$ is smaller than the domain of $u$, because the nonlocal derivative needs to ``see'' a ball of radius $\delta$ around every point in $\Omega$. Hence, $u$ is defined on $\Omega_\delta$ while $\D_\delta u$ is only defined on $\Omega$. This means $\D_\delta$ acts between two distinct Banach spaces, which can be inconvenient and motivates the following definitions.

	\begin{definition}\label{rmk: isometric isomorphism}
		The space $L^p_0$ introduced in Definition~\ref{def: zero on collar} is a closed subspace of $L^p\left(\Omega_\delta\right)$. It is also isometrically isomorphic to $L^p\left(\Omega\right)$, via the extension-by-zero operator $E: L^p\left(\Omega\right)\to L^p_0$ defined as
		\[
		Eu(x)\defn \begin{cases}
			u(x), & x\in\Omega\\
			0, & x\in\Gamma_\delta.
		\end{cases}
		\]
		We also let $P: L^p\left(\Omega_\delta\right)\to L^p\left(\Omega\right)$ denote the operator defined by $Pu(x)= u(x)$ for all $x\in \Omega$.
	\end{definition}
	
	For any $x\in\Omega$, we see that $EPu(x) = u(x)$, while for all $x\in\Gamma_\delta$, we have $EPu(x)=0$. This means $EP$ acts as the identity operator when its domain is restricted to $L^p_0$. Thus, when restricted to $L^p_0$, the operator $P$ acts as the inverse of $E$ and is an isometric isomorphism between $L^p_0$ and $L^p\left(\Omega\right)$. In this sense, $P$ is a projection operator from $L^p\left(\Omega_\delta\right)$ onto the closed subspace $L^p\left(\Omega\right)\cong L^p_0$.

	Using the extension $E$ and projection $P$ allows us to change the function spaces $\D_\delta$ acts between. The point of this is to apply some tools from the theory of bounded linear operators acting on a single Banach space.

	\begin{definition}
		We define the \textit{extended nonlocal derivative} $\overline{\D_\delta}:L^p\left(\Omega_\delta\right)\to L^p\left(\Omega_\delta\right)$ as $\overline{\D_\delta}\defn E\D_\delta$. This can also be written as a single integral operator. If $u\in L^p\left(\Omega_\delta\right)$ and $x\in\Omega_\delta$, then
		\begin{equation}\label{eqt: extended domain}
			\overline{D_\delta} u(x)\defn \int_{\Omega_\delta} [u(y)-u(x)]\mu_\delta(y-x)\chi_{B_\delta(x)}(y)\chi_{\Omega}(x) \dif y.
		\end{equation}
		For points in $\Omega$, equation~\eqref{eqt: extended domain} is equivalent to the first definition,~\eqref{eqt: define D}. The only difference is that $\overline{\D_\delta} u(x)=0$ for all $x\in\Gamma_\delta$, while $\D_\delta u$ is undefined for such points.
	\end{definition}
	
	Viewing $\D_\delta$ as a nonlocal directional derivative naturally leads us to construct nonlocal analogs to other familiar differential operators. We use $\mathscr A_\delta$ to denote a first-order, linear nonlocal differential operator, $\G_\delta$ to denote a nonlocal gradient, and $\mathscr D_\delta$ to denote a nonlocal divergence. Precise formulations are given in Definition~\ref{def: nonlocal differential operators}. As shown in Propositions~\ref{prop: convergence of vect ops} and~\ref{prop: div grad adj}, these operators can be seen as nonlocal analogs of the familiar gradient and divergence.

	\section{Basic Properties of the Nonlocal Directional Derivative}\label{sect: Operator}

	In this section, we explore some basic properties of the operator $\D_\delta$. In particular, we show that it acts as a bounded linear operator on $L^p\left(\Omega_\delta\right)$. Then, under some general hypotheses on $\mu_\delta$, we show that $\D_\delta$ converges to a directional derivative as $\delta$ shrinks. After presenting some examples, we then define nonlocal analogs of first-order differential operators from vector calculus. These are not new operators, and in fact versions of them have been well-studied. See Sections~\ref{sect: intro} and~\ref{sect: symm} for additional comparison with existing results.

	\subsection{Motivation}\label{sect: motiv}
	One way to interpret the definition of $\D_\delta$ in equation~\eqref{eqt: define D} is as an average of difference quotients. The classical derivative is a local operator since it involves taking a limit at a point. That is, $\grad \mid_{x_0}$ only depends on the values of the input function $u$ in an infinitesimal neighbor of the point $x_0$. We want to develop a nonlocal analog that still captures information about how the function $u$ changes in a small region about $x$. One approach is to take a weighted average of the difference quotient, rather than taking a limit:
	\[
	\D_\delta u (x) = \int_{B_\delta(0)}\frac{u(x+z)-u(x)}{z} \rho_\delta(z)\dif z,
	\]
	where $\rho_\delta$ is some weighting function or probability distribution. If we want $\D_\delta$ to approach the local derivative, perhaps $\rho_\delta$ should weight points closer to $x$ more heavily. Though, strictly speaking, this is not required by the theory. Setting $\mu_\delta(z) = \frac1z \rho_\delta(z)$, we obtain the definition of $\D_\delta$ given in equation~\eqref{eqt: define D}. If $\rho_\delta$ is a probability distribution, then $\int \rho_\delta \dif z=1$. In terms of our notation, this amounts to the requirement that
	\begin{equation}\label{eqt: probability scaling}
		\int_{B_\delta(0)} z\mu_\delta(z)\dif z = 1.
	\end{equation}
	In Theorem~\ref{thm: convergence to classical derivative}, the limiting behavior of this first moment determines the differential operator $\D_\delta$ approaches, as $\delta\to0$.
	
	Now we present a brief formal argument to demonstrate the relation between $\D_\delta$ and a derivative. With $d=1$ and $\Omega=(a,b)$, assume $u: \Omega_\delta\to\R$ is analytic. At $x\in(a,b)$, we can introduce the Taylor expansion of $u$ in the definition of $\D_\delta$ to obtain
	\begin{multline}
		\D_{\delta} u(x) = u'(x)\int_{-\delta}^{\delta} z \mu_{\delta}(z)\dif z +\frac{u''(x)}{2}\int_{-\delta}^{\delta}  z^2 \mu_{\delta}(z)\dif z+\frac{u'''(x)}{6}\int_{-\delta}^{\delta}  z^3 \mu_{\delta}(z)\dif z \\ +\frac{u^{(4)}(x)}{4!}\int_{-\delta}^{\delta}  z^4 \mu_{\delta}(z)\dif z+\frac{u^{(5)}(x)}{5!}\int_{-\delta}^{\delta}  z^5 \mu_{\delta}(z)\dif z+\dots
	\end{multline}
	If $\{\mu_\delta\}_{\delta>0}$ is bounded in $L^1(\R)$ and $\int_{-\delta}^{\delta} z \mu_{\delta}(z)\dif z \to 1$, then the first term in the above expansion dominates and
	$\D_{\delta} u(x) \to u'(x)$.
	
	We can also motivate the convergence $\D_\delta\to \od{}{x}$ in terms of mollification (see, for example, Section 5.1 of~\cite{Du_Gunzberger}). In this setting, we view $\set{\mu_\delta}_{\delta>0}$ as a derivative of an approximation of the identity. 
	For example, suppose $u$ is smooth enough and set $\mu_\delta \defn -\eta_\delta'$, where $\eta_\delta$ is appropriately scaled standard mollifier: 	\[
	\eta_\delta(z) = \begin{dcases}
		C_\delta e^{\frac{-1}{1-\abs{z/\delta}^2}}, & \abs{z/\delta}<1\\
		0 & \abs{z/\delta}\ge 1.
	\end{dcases}
	\]
	The function $\eta$ is symmetric, but its derivative $\eta_\delta'$ is antisymmetric. One can quickly check that the corresponding nonlocal derivative takes the form of a convolution. Using the fact that derivatives of convolutions can be applied to either term, we have
	$
	\D_\delta u = u\ast (-\mu_\delta) =  u\ast \eta_\delta'=u'\ast \eta_\delta.
	$
	Since the functions $\eta_\delta$ form an approximation of the identity, we expect $u'\ast \eta_\delta$ to converge to $u'$ as $\delta$ shrinks. This idea can be made precise with the language of distributions. An alternative approach is given in Theorem~\ref{thm: convergence to classical derivative}.
	
	\subsection{Alternate Expressions for $\D_\delta$} \label{sect: alt exp}	
	Recall the definition of the nonlocal directional derivative: $
	\D_\delta u (x)\defn \int_{B_\delta(0)} [u(x+z)-u(x)]\mu_\delta(z)\dif z.
	$
	In this form, the domain of integration is independent of $x$, which can be helpful in certain contexts. But there are several equivalent ways of expressing this operator. Via a change of variables, we have
	\begin{equation}\label{eqt: def of Ddelta change vars}
		\D_\delta u (x)= \int_{B_\delta(x)} [u(y)-u(x)]\mu_\delta(y-x)\dif y.
	\end{equation} 
	We assume that $\mu_\delta:\R^d\to\R$ has support contained in $\overline{B_\delta(0)}$. To make this explicit, we sometimes include a characteristic function inside the integral.
	
	It is often helpful to decompose $\D_\delta$ as an integral operator plus a scalar multiple of $P$, the projection defined in Remark~\ref{rmk: isometric isomorphism}:
	\begin{gather}\label{eqt: decompose Ddelta}
		\D_\delta = I_\delta - \overline{\mu_\delta} P ,\ \  \text{ where } \ 		\overline{\mu_\delta}\defn \int_{B_\delta(0)} \mu_\delta(z)\dif z, \text{ and}\\ 
		I_\delta: L^p\left(\Omega_\delta\right)\to L^p\left(\Omega\right), \ \ I_\delta u(x) \defn \int_{\Omega_\delta} u(y)\mu_\delta(y-x)\dif y.
	\end{gather}
	
	\begin{remark}\label{rmk: I is crosscor}
		The integral operator $I_\delta$ can be identified as a cross-correlation: $I_\delta u = u\star \mu_\delta$. With $s(x)=-x$, we may instead write $I_\delta u= u\ast (\mu_\delta\circ s)$. Thus, the operator $I_\delta$ inherits many of the familiar properties of convolutions.
	\end{remark}
	
	\begin{definition}
		We also define the extended operator $\overline{I}_\delta: L^p(\dom_\delta)\to L^p(\dom_\delta)$ by
		\[
		\overline{I_\delta}u(x)\defn EI_\delta u(x)= \int_{\Omega_\delta} u(y)\mu_\delta(y-x)\chi_{B_\delta(x)}(y)\chi_{\Omega}(x)\dif y.
		\]
	\end{definition}
	We observe that, using the operators $E$ and $P$, defined in Remark~\ref{rmk: isometric isomorphism} we can write
	\begin{gather}\label{eqt: decompose extended domain}
		\overline{\D_\delta} = \overline{I_\delta} - \overline{\mu_\delta} EP
	\end{gather}
	where the extended nonlocal derivative $\overline{\D_\delta}$ was defined in equation~\eqref{eqt: extended domain}.
	
	The integral operator $\overline{I_\delta}$ in equation~\eqref{eqt: decompose extended domain} can be written as
	\begin{equation}\label{eqt: kernel of int op}
		\overline{I_\delta} u(x) = \int_{\Omega_\delta} u(y)K(x,y)\dif y, \text{ where } K(x,y)\defn \mu_\delta(y-x)\chi_{B_\delta(x)}(y)\chi_{\Omega}(x).
	\end{equation}
	This perspective allows us to apply the well-developed theory of integral operators to $\overline{\D_\delta}$. We can also interpret the other term in in the definition of $\overline{\D_\delta}$ as a multiplication operator: $\overline{\mu_\delta}EP= M_{g}$ where $g = \overline{\mu_\delta}\chi_{\Omega}$. Hence, the extended nonlocal derivative is a combination of an integral operator and a multiplication operator: $\overline{\D_\delta} = \overline{I_\delta} - M_{g}$.
	
	\subsection{Continuity and Convergence to the Classical Derivative}\label{sect: convergence}
	The operator $\D_\delta$ is defined on all of $L^p(\Omega_\delta)$ and in fact is a continuous map. 
	\begin{Prop}\label{prop: D is bdd}
		For any $p\in[1,\infty]$, $\D_\delta: L^p(\Omega_\delta)\to L^p\left(\Omega\right)$ and $\overline{\D_\delta}: L^p(\Omega_\delta)\to L^p(\Omega_\delta)$ are bounded linear operators. 
	\end{Prop}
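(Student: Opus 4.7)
The plan is to verify linearity directly from the definition and then obtain boundedness by exploiting the decomposition $\D_\delta = I_\delta - \overline{\mu_\delta}\, P$ from equation~\eqref{eqt: decompose Ddelta}, estimating each piece separately and combining via the triangle inequality.

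Linearity is immediate: for $u, v \in L^p(\Omega_\delta)$ and scalars $a, b$, linearity of the Lebesgue integral gives $\D_\delta(au + bv) = a\D_\delta u + b\D_\delta v$ pointwise a.e.\ on $\Omega$, and similarly for $\overline{\D_\delta} = E\D_\delta$. For boundedness, I would first handle the projection term: since $\Omega \subseteq \Omega_\delta$, the restriction $P$ satisfies $\|Pu\|_{L^p(\Omega)} \le \|u\|_{L^p(\Omega_\delta)}$, and the scalar factor obeys $|\overline{\mu_\delta}| \le \|\mu_\delta\|_{L^1(\R^d)}$ by the definition of $\overline{\mu_\delta}$.

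The main ingredient is the bound on $I_\delta$. By Remark~\ref{rmk: I is crosscor}, $I_\delta u$ is the cross-correlation $u \star \mu_\delta$, which I would rewrite as a convolution $u \ast (\mu_\delta \circ s)$ after extending $u$ by zero from $\Omega_\delta$ to all of $\R^d$. The key geometric observation is that for $x \in \Omega$ one has $B_\delta(x) \subseteq \Omega_\delta$ by the definition of the collar, so the values of $u$ needed to compute $I_\delta u(x)$ are all recorded in $L^p(\Omega_\delta)$ and the zero extension changes nothing on the relevant set. Then Young's inequality for convolutions gives
\[
\|I_\delta u\|_{L^p(\Omega)} \le \|I_\delta u\|_{L^p(\R^d)} \le \|\mu_\delta\|_{L^1(\R^d)} \|u\|_{L^p(\Omega_\delta)},
\]
valid for every $p \in [1,\infty]$. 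Combining via the triangle inequality yields $\|\D_\delta u\|_{L^p(\Omega)} \le 2\|\mu_\delta\|_{L^1(\R^d)}\|u\|_{L^p(\Omega_\delta)}$, establishing boundedness of $\D_\delta$.

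For $\overline{\D_\delta} = E\D_\delta$, I would invoke Definition~\ref{rmk: isometric isomorphism}: the extension-by-zero operator $E: L^p(\Omega) \to L^p_0 \subseteq L^p(\Omega_\delta)$ is an isometry, so $\|\overline{\D_\delta} u\|_{L^p(\Omega_\delta)} = \|\D_\delta u\|_{L^p(\Omega)}$ and the same bound transfers with the same constant. The only delicate step is justifying the passage to $\R^d$ when applying Young's inequality for the endpoint cases $p = 1$ and $p = \infty$; here I would simply note that Young's inequality holds across the full range $p \in [1,\infty]$ when one factor lies in $L^1$, so no separate argument is needed. No step is a genuine obstacle — this proposition is primarily bookkeeping to record a continuity fact used repeatedly in the sequel.
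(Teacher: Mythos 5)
Your proof is correct and arrives at the same operator norm bound $2\|\mu_\delta\|_{L^1(\R^d)}$, but via a slightly different route than the paper. The paper applies Minkowski's integral inequality directly to the integrand $[u(\cdot+z)-u(\cdot)]\mu_\delta(z)$, bounding $\|u(\cdot+z)-u(\cdot)\|_{L^p(\Omega)} \le 2\|u\|_{L^p(\Omega_\delta)}$ for $|z|\le\delta$ and then integrating against $|\mu_\delta|$ in one stroke. You instead invoke the decomposition $\D_\delta = I_\delta - \overline{\mu_\delta} P$ from equation~\eqref{eqt: decompose Ddelta}, bound the convolution piece via Young's inequality and the multiplication piece trivially, then combine by the triangle inequality. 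Both are elementary and rely on essentially the same underlying estimate (Young's inequality is of course a consequence of Minkowski's integral inequality); the paper's route is marginally more self-contained since it does not require the decomposition lemma or the zero-extension-to-$\R^d$ bookkeeping, while yours has the pedagogical virtue of previewing the decomposition that is used heavily in Sections~\ref{sect: adjoint} and~\ref{sec: compact}. Your handling of the $p=1$ and $p=\infty$ endpoints and the isometry argument for $\overline{\D_\delta}$ matches the paper exactly. No gaps.
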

	
	\begin{proof}
		Linearity follows from the linearity of integration. So we only check that the nonlocal derivative is bounded. to this end, fix any $u\in L^p(\Omega_\delta)$. Applying Minkowski's integral inequality to the definition of $\D_\delta$, we obtain \[\norm{\D_\delta u}_{L^p(\Omega)}\le \int_{\R^d} \abs{\mu_\delta(z)} \norm{u(\cdot + z) - u(\cdot)}_{L^p(\Omega)}\dif z \le 2\norm{u}_{L^p(\Omega_\delta)}\norm{\mu_\delta}_{L^1(\R^d)}. \]
		Hence, the operator norm of $\D_\delta$ is bounded above by $2\norm{\mu_\delta}_{L^1(\R^d)}$.
		
		Since $\overline{\D_\delta} = E\D_\delta$ and $E$ is an isometry, we obtain the same bound on the norm of the extended nonlocal derivative.
	\end{proof}
	
	As indicated above, $\delta>0$ identifies the scale at which information from $u$ affects $\D_\delta u$. Thus, it determines how ``nonlocal'' of an operator $\D_\delta$ is. Many nonlocal models are expected to replicate a classical local model when $u$ is sufficiently regular. This requires, in particular, $\D_\delta$ to converge to a given differential operator $\mathsf{D}$ as $\delta\to0^+$. The next Theorem provides provide structural assumptions on $\mu_\delta$ that ensure this convergence. Moreover, if $\mu_\delta$ has compact support, we provide lower bounds for the rate of convergence. This makes precise the intuition presented in Section~\ref{sect: motiv}.
	
	There are a variety of related results existing in the literature. For example, Lemma 3.6 in~\cite{du_nonlocal_2019}, Theorems 3.2 and 3.12 in~\cite{mengesha_characterization_2016}, Theorems 1.1 and 3.2 in~\cite{mengesha_localization_2015}, or Theorem 2.1 in~\cite{shankar_nonlocal_2016}. For a more precise estimate on the rate of convergence via the Hardy-Littlewood maximal operator, see \cite{larios_note_2024}.
	
	\begin{thm}[Operator Convergence]\label{thm: convergence to classical derivative}
		Let $\{\mu_\delta\}_{\delta>0}\subseteq L^1(\R^d)$ and $a$ be a fixed unit vector in $\R^d$. Assume there exists a nondecreasing $\omega\in C([0,\infty);[0,\infty))$ such that $\omega(0)=0$. Later, we refer to the following hypotheses:
		\begin{enumerate}
			\item[(i)] \textbf{$L^1$-bounded moments}: There exists $M<\infty$ such that
			\begin{equation*}\label{eqt: first moment is bounded}
				\int_{\R^d} \abs{\mu_{\delta}(z)}\abs{z}\dif z < M
				\quad\text{ for all }\delta>0;
			\end{equation*}
			\item[(ii)] \textbf{Convergence of moments}:
			\begin{equation*}\label{eqt: scaling 1}
				\left|
				\int_{\R^d} \mu_{\delta}(z)z \dif z-a\right|\le\omega(\delta);
			\end{equation*}
			\item[(iii')] \textbf{Concentration}:
			\begin{equation*}\label{eqt: concentration}
				\int_{\R^d\setminus B_\delta(0)}|\mu_\delta(z)||z|\dif z\le\omega(\delta),
				\quad\text{ for all }\delta>0;
			\end{equation*}
			\item[(iii'')] \textbf{Compact support}: $\supp(\mu_\delta)\subseteq\overline{B}_\delta(0)$.
		\end{enumerate}
		Let $1\le p<\infty$ be given. Then the following hold.
		\begin{enumerate}
			\item[(a)] If $u\in L^p(\R^d)$ and (i), (ii), and (iii') all hold, then $\D_\delta u\in L^p(\R^d)$, and for each $\varphi\in W^{1,p'}(\R^d)$ and $\eps>0$, there exists $\delta_\varepsilon>0$ such that
			\[
			\left|\int_{\R^d}\lp\D_\delta u(x)\rp\varphi(x)\dif x
			+\int_{\R^d} u(x)\lp a\bdot\nabla\varphi (x)\rp\dif x\right|<\varepsilon\|u\|_{L^p(\R^d)}.
			\]
			Here, $\bdot$ denotes the Euclidean dot product.
			\item[(b)] If $u\in W^{1,p}(\R^d)$ and (i), (ii), and (iii') all hold, then for each $\eps>0$, there exists $\delta_\eps>0$ such that,
			\[
			\|\D_\delta u-a\bdot\grad u\|_{L^p(\R^d)}<\eps
			\quad \text{ for all }0<\delta<\delta_\eps.
			\]
			\item[(c)] If there exists $0<\alpha\le 1$ such that both $\omega,\nabla u\in C^{0,\alpha}(\R^d)$ and (i), (ii), and (iii'') hold, then for each $x\in\R^d$, there exists $C<\infty$ such that
			\[
			|\D_\delta u(x)-a\bdot\grad u(x)|<C\delta^\alpha
			\quad\text{ for all }\delta>0.
			\]
		\end{enumerate}
	\end{thm}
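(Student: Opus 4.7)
The plan is to take (b) as the central quantitative estimate, then derive (a) from it by duality and handle (c) as a pointwise variant. The engine for (b) is the fundamental theorem of calculus $u(x+z)-u(x)=\int_0^1 z\bdot\grad u(x+tz)\,dt$, valid for $u\in W^{1,p}(\R^d)$. Inserting this into~\eqref{eqt: define D} and adding and subtracting $\bigl(\int\mu_\delta(z)z\,dz\bigr)\bdot\grad u(x)$ yields the decomposition
\[
\D_\delta u(x)-a\bdot\grad u(x)=\Big(\int_{\R^d}\mu_\delta(z)z\,dz-a\Big)\bdot\grad u(x)+\int_{\R^d}\mu_\delta(z)\int_0^1 z\bdot[\grad u(x+tz)-\grad u(x)]\,dt\,dz.
\]
The first term has $L^p$-norm at most $\omega(\delta)\|\grad u\|_{L^p}$ by (ii), while Minkowski's integral inequality controls the second by $\int|\mu_\delta(z)||z|\int_0^1\|\grad u(\cdot+tz)-\grad u\|_{L^p}\,dt\,dz$.

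To finish (b), I would split this $z$-integral at $|z|=\eta$. Given $\eps>0$, choose $\eta$ so that $\|\grad u(\cdot+h)-\grad u\|_{L^p}<\eps/(3M)$ for $|h|\le\eta$, using $L^p$-continuity of translation; hypothesis (i) then bounds the near-zero piece by $\eps/3$ uniformly in $\delta$. For the far piece, use the crude bound $\|\grad u(\cdot+tz)-\grad u\|_{L^p}\le2\|\grad u\|_{L^p}$, and once $\delta<\eta$ observe $\{|z|>\eta\}\subseteq\{|z|>\delta\}$, so (iii') bounds this piece by $2\|\grad u\|_{L^p}\omega(\delta)$; shrinking $\delta$ makes it less than $\eps/3$. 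Combined with the first term this gives (b).

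For (a), Fubini followed by the substitution $w=-z$ yields
\[
\int_{\R^d}\D_\delta u(x)\varphi(x)\,dx=\int_{\R^d}u(x)\int_{\R^d}\tilde\mu_\delta(w)[\varphi(x+w)-\varphi(x)]\,dw\,dx,\qquad\tilde\mu_\delta(w)\defn\mu_\delta(-w),
\]
so the quantity to be estimated equals $\int u(x)\,[\tilde{\D}_\delta\varphi(x)+a\bdot\grad\varphi(x)]\,dx$, where $\tilde{\D}_\delta$ denotes the nonlocal derivative built from $\tilde\mu_\delta$. The reflected kernel inherits (i) and (iii'), and satisfies (ii) with $-a$ in place of $a$, so applying (b) to $\tilde\mu_\delta$, vector $-a$, and test function $\varphi\in W^{1,p'}(\R^d)$ gives $\|\tilde{\D}_\delta\varphi+a\bdot\grad\varphi\|_{L^{p'}}<\eps$ for small $\delta$; Hölder's inequality then closes (a). For (c), reuse the pointwise decomposition above: the first term is dominated by $\omega(\delta)|\grad u(x)|\le C\delta^\alpha|\grad u(x)|$ since $\omega(0)=0$ and $\omega\in C^{0,\alpha}$, and in the second, (iii'') restricts integration to $|z|\le\delta$ so Hölder continuity of $\grad u$ yields $|\grad u(x+tz)-\grad u(x)|\le C|tz|^\alpha\le C\delta^\alpha$; hypothesis (i) then produces the uniform $O(\delta^\alpha)$ bound.

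The main obstacle is engineering the splitting in (b): hypothesis (iii') is indexed by the same $\delta$ being sent to zero rather than by an independent cutoff, so one must first pick $\eta$ to exploit translation continuity and then shrink $\delta$ both to activate the tail bound and to enforce $\delta<\eta$. Once this argument is in place, (a) and (c) follow from (b) by the reductions above with minimal extra work.
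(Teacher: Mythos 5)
Your treatment of parts (b) and (c) matches the paper's argument: the fundamental theorem of calculus decomposition, Minkowski's integral inequality, a split of the $z$-integral between a small-$|z|$ piece controlled by $L^p$-continuity of translation and hypothesis (i), and a tail piece controlled by (iii') once $\delta$ is below the cutoff; part (c) likewise uses (iii''), the H\"older continuity of $\nabla u$ and $\omega$, and the uniform moment bound (i). The genuine departure is in part (a). The paper re-runs the FTC computation directly on $\varphi\in C^\infty_{\mathrm{c}}(\R^d)$, moving the difference quotient onto $\varphi$ after Fubini, and then appeals to density of $C^\infty_{\mathrm c}$ in $W^{1,p'}$. You instead reduce (a) to (b) by reflecting the kernel, $\tilde\mu_\delta(w)\defn\mu_\delta(-w)$, observing that $\tilde\mu_\delta$ inherits (i) and (iii') and satisfies (ii) with $-a$ in place of $a$, and then finishing with H\"older. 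This makes explicit the transposition identity $\int\D_\delta u\,\varphi=\int u\,\tilde\D_\delta\varphi$ that the paper exploits only implicitly, and it avoids a separate density step. Both routes are correct for $1<p<\infty$ and roughly the same length.

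One endpoint deserves a flag: when $p=1$, part (a) is stated for $\varphi\in W^{1,p'}(\R^d)=W^{1,\infty}(\R^d)$, and your reduction would invoke (b) with exponent $p'=\infty$, outside the stated range $1\le p<\infty$. The obstruction is real, not merely formal---translation is not continuous in $L^\infty$ for a general bounded gradient---so the reduction silently requires $\nabla\varphi$ uniformly continuous (e.g.\ $\varphi\in C^1_{\mathrm c}$). The paper's density argument has the identical problem, since $C^\infty_{\mathrm c}$ is not dense in $W^{1,\infty}$, so the gap is inherited from the statement rather than introduced by your strategy; but since you lean on (b) formally, it is worth acknowledging that the $p=1$ case needs either a direct argument for sufficiently regular $\varphi$ or a weakened claim.
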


	\begin{proof}
		\textbf{Part (a)}: Let $u\in L^p(\R^d)$ and $\varphi\in C^\infty_{\text{c}}(\R^d)$ be given. The integrability of $\D_\delta u$ follows from Young's convolution inequality. For brevity, put
		\[
		\mathsf{E}=\int_{\R^d}\D_\delta u(x)\varphi(x)\dif x+\int_{\R^d}u(x) a\bdot\nabla\varphi(x)\dif x.
		\]
		By the fundamental theorem of calculus,
		\begin{equation*}\label{eqt: FTC}
			\varphi(x-z)-\varphi(x)=-\nabla \varphi(x)\bdot z
			-\int_0^1\left[\nabla \varphi(x-sz)-\nabla \varphi(x)\right]\bdot z\dif s,
			\quad\text{ for all }z\in\R^d.
		\end{equation*}
		With $x\in\dom$, from the definition of $\D_\delta u\in L^p(\R^d)$, we may write
		\begin{align*}
			\int_{\R^d}\D_\delta u(x)\varphi(x)\dif x
			=&
			\int_{\R^d}\int_{\R^d}\ls u(x+z)-u(x)\rs\varphi(x)\mu_\delta(z)\dif z\dif x\\
			=&
			\int_{\R^d}\int_{\R^d}u(x)\ls\varphi(x-z)-\varphi(x)\rs\mu_\delta(z)\dif z\dif x\\
			=&
			-\int_{\R^d} u(x)\lp\int_{\R^d}\mu_\delta(z) z\dif z\rp\bdot\nabla\varphi(x)\dif x\\
			&\qqquad
			-\int_{\R^d}\int_{\R^d}\int_0^1 u(x)\ls\nabla\varphi(x-sz)-\nabla\varphi(x)\rs\mu_\delta(z)\bdot z\dif s\dif z\dif x.
		\end{align*}
		Here, we applied Fubini's theorem (twice) and a change of variables. Thus,
		\begin{multline*}
			\abs{\mathsf{E}}
			\le
			\int_{\R^d}\left|\int_{\R^d}\mu_\delta(z)z\dif z-a\right||u(x)||\nabla \varphi(x)|\dif x\\
			+\int_{\R^d}\int_{\R^d}\int_0^1
			|u(x)||\nabla\varphi(x+sz)-\nabla \varphi(x)||\mu_\delta(z)||z|\dif s\dif z\dif x
		\end{multline*}
		Fix $\eps>0$. The convergence of the first moments of $\mu_\delta$ implies that there exists a $\delta'>0$ such that 
		\[
		\left|\int_{\R^d}\mu_\delta(z)z\dif z-a\right|
		\le\frac{\varepsilon}{1+3\|\nabla\varphi\|_{L^{p'}(\R^d)}},
		\quad\text{ for all }0<\delta<\delta'.
		\]
		Using this, Fubini's Theorem and H\"older's inequality, we obtain the bound
		\begin{equation*}
			\abs{\mathsf{E}}
			\le \frac{\varepsilon}{3}\|u\|_{L^p(\R^d)}
			+\int_0^1\int_{\R^d}|\mu_\delta(z)||z|
			\|u\|_{L^p(\R^d)}
			\|\nabla\varphi(\cdot+sz)-\nabla\varphi(\cdot)\|_{L^{p'}(\R^d)}\dif z\dif s.
		\end{equation*}
		Since $\nabla\varphi$ is continuous with respect to the $L^{p'}$-norm, there exists $r>0$ such that
		\[
		\|\nabla\varphi(\cdot+h)-\nabla\varphi(\cdot)\|_{L^{p'}(\R^d)}
		\le\frac{\varepsilon}{1+3M},
		\quad\text{ for all }h\in B_{r}(0).
		\]
		Here $M<\infty$ is the uniform $L^1$-bound from \eqref{eqt: first moment is bounded}. Furthermore, the concentration assumption provides a $0<\delta_{\varepsilon}\le\delta'$ such that
		\[
		\int_{\R^d\setminus B_\rho(0)}|\mu_\delta(z)||z|\dif z
		\le
		\omega(\delta)\le\omega(\delta_\eps)<
		\frac{\varepsilon}{1+6\|\nabla\varphi\|_{L^{p'}(\R^d)}},
		\quad\text{ for all }0<\delta<\delta_{\varepsilon}.
		\]
		It follows that, for all $0<\delta<\delta_\varepsilon$,
		\begin{align*}
			\abs{\mathsf{E}}\le&\frac{\varepsilon}{3}\|u\|_{L^p(\R^d)}+\int_0^1\int_{\R^d}|\mu_\delta(z)||z|
			\|u\|_{L^p(\R^d)}
			\|\nabla\varphi(\cdot+sz)-\nabla\varphi(\cdot)\|_{L^{p'}(\R^d)}\dif z\dif s\\
			\le&
			\frac{\varepsilon}{3}\|u\|_{L^p(\R^d)}
			+\lp\frac{\eps}{1+3M}\rp\|u\|_{L^p(\R^d)}\int_{B_{\rho}(0)}
			|\mu_\delta(z)||z|\dif z\dif s\\
			&\qqqquad\qqqquad+
			2\|u\|_{L^p(\R^d)}\|\nabla\varphi\|_{L^{p'}(\R^d)}\int_{\R^d\setminus B_\rho(0)}
			|\mu_\delta(z)||z|\dif z\\
			\le&
			\varepsilon\|u\|_{L^p(\R^d)}.
		\end{align*}
		Using density of $C^\infty_{\text{c}}(\R^d)$ we can extend the convergence to any $\phi\in W^{1,p'}(\R^d)$. This establishes (a).
		
		\textbf{Part (b)}: We may assume that $u\in C^1(\R^d)$ and $\grad u\in L^p(\R^d)$. Fix $\eps>0$. Given $x\in\R^d$, similar to part (a), we use the fundamental theorem of calculus to obtain
		\begin{equation}\label{eqt: Bound}
			\left|\D_\delta u(x)-a\bdot\nabla u(x)\right|
			\le
			|\nabla u(x)|\left|\int_{\R^d}\mu_\delta(z)z\dif z-a\right|
			+\left|\int_{\R^d}\int_0^1
			\left[\nabla u(x+sz)-\nabla u(x)\right]\bdot\mu_\delta(z)z\dif s\dif z\right|.
		\end{equation}
		Using this and Minkowski's integral inequality, we obtain the bound
		\begin{multline}\label{eqt: conv bnd 1}
			\|\D_\delta u-a \bdot\nabla u\|_{L^p(\R^d)}
			\le \int_{\R_d}\left|\int_{\R^d}\mu_\delta(z)z\dif z-a\right||\nabla u(x)|\dif x\\
			+\int_0^1\int_{\R^d}|\mu_\delta(z)||z|
			\|\nabla u(\cdot+sz)-\nabla u(\cdot)\|_{L^p(\R^d)}\dif z\dif s.
		\end{multline}
		From assumptions (i), (ii), and (iii'), we also deduce there exists $\delta_{\eps}>0$ such that 
		\[
		\left|\int_{\R^d}\mu_\delta(z)z\dif z-a\right|
		\le\frac{\varepsilon}{1+3\|\nabla u\|_{L^p(\R^d)}}
		\]
		and a $r>0$ such that
		\[
		\|\nabla u(\cdot+h)-\nabla u(\cdot)\|_{L^p(\R^d)}
		\le\frac{\varepsilon}{1+3M}
		\,\text{ and },
		\int_{\R^d\setminus B_r(0)}|\mu_\delta(x)||z|\dif z
		\le
		\omega(\delta)\le\omega(\delta_\eps)<
		\frac{\varepsilon}{1+6\|\nabla u\|_{L^p(\R^d)}},
		\]
		for all $h\in B_r(0)$ and $0<\delta<\delta_\eps$.
		It follows that, for all $0<\delta<\delta_\varepsilon$,
		\begin{align*}
			\int_0^1\int_{\R^d}|\mu_\delta(z)||z|
			\|\nabla u(\cdot+sz)-\nabla u(\cdot)\|_{L^p(\R^d)}\dif z\dif s
			\le&
			M\int_0^1\int_{B_{\rho}(0)}
			\|\nabla u(\cdot+sz)-\nabla u(\cdot)\|_{L^p(\R^d)}\dif z\dif s\\
			&+
			2\|\nabla u\|_{L^p(\R^d)}\int_{\R^d\setminus B_\rho(0)}
			|\mu_\delta(z)||z|\dif z\\
			\le&\frac{2\varepsilon}{3}.
		\end{align*}
		Incorporating this into~\eqref{eqt: conv bnd 1} yields the statement in (b).

		\textbf{Part (c)}: First observe that $\supp(\mu_\delta)\subseteq\overline{B}_\delta(0)$ implies
		\[
		\int_{\R^d\setminus B_\delta(0)}\int_0^1
		\left|\nabla u(x+sz)-\nabla u(x)\right||\mu_\delta(z)||z|\dif z=0
		\]
		Let $x\in\R^d$ and $\delta>0$ be given. From~\eqref{eqt: Bound} and the convergence of moments
		\begin{equation*}
			\left|\D_\delta u(x)-a\bdot\nabla u(x)\right|
			\le
			|\nabla u(x)|\omega(\delta)
			+
			\int_{B_\delta(0)}\int_0^1
			\left|\nabla u(x+sz)-\nabla u(x)\right||\mu_\delta(z)||z|\dif z.
		\end{equation*}
		As both $\omega$ and $\grad u$ are in $C^{0,\alpha}$, this implies that, for some $K<\infty$,
		\[
		\left|\D_\delta u(x)-a\bdot\nabla u(x)\right|
		\le
		|\nabla u(x)|\omega(\delta)
		+ K\delta^\alpha
		\int_{B_\delta(0)}|\mu_\delta(z)||z|\dif z \le     
		K\delta^\alpha\left(|\nabla u(x)| + M\right).
		\]
		For the last inequality, we used the $L^1$-bound in assumption (i).
	\end{proof}

	\begin{remark}\label{rmk: assume R wlog}
		In the previous theorem, the choice of ${\R^d}$ for the domain was primarily for convenience. Since we are only interested in behavior for small $\delta$, there is no need to consider all positive $\delta$. For example, the same argument holds if $u\in W^{1,p}\left(\Omega_C\right)$ (for any $C>0$) and we only consider kernels $\set{\mu_{\delta}}_{\delta\in(0,C)}$.
		
		Additionally, if we assume that $\Omega_C$ is an extension domain, then there exists $\bar u: \R^d\to \R$ such that $\bar u(x) = u(x)$ for all $x\in \Omega_\delta $ and $\norm{\bar u}_{W^{1,p}(\R^d)} \le C_u\norm{u}_{W^{1,p}\left(\Omega_\delta\right)}$ for some constant $C_u$~\cite[Theorem 1, Section 5.4]{Evans}. Then we apply the theorem as stated to this extended function. For a characterization of extension domains, see the works~\cite{jones_extension_1980, jones_quasiconformal_1981}. This class of domains is relatively large, and even includes fractal domains like the Koch snowflake.
		
		In this work, we focus on kernels with support contained in the closed ball $\overline{B_\delta(0)}$. So long as this holds and $\lim_{\delta\to0^+} \int_{B_\delta(0)} z\mu_\delta(z)\dif z = a$, then the concentration and convergence of moments assumptions are satisfied.
	\end{remark}
	
	\begin{remark}\label{rmk: higher order}
		Returning to the discussion at the beginning of Section~\ref{sect: motiv}, we see that choosing $\mu_\delta$ so that the higher order moments are zero ensures faster convergence $\D_\delta u \to a\bdot\nabla u$ for sufficiently smooth functions $\varphi$ or $u$. As illustrated in Example \ref{ex: poly}, given $k\in\mathbb{N}$ one can design a piecewise constant or polynomial kernel such that
		\begin{gather*}
			\int_{B_\delta(0)} \mu_{\delta}(z)z\dif z=a, \ \ 
			\int_{B_\delta(0)} \mu_{\delta}(z) z\otimes z \dif z=0,\ \ \dots, \ \,
			\text{ and }
			\int_{B_\delta(0)} \mu_{\delta}(z)
			\underbrace{z\otimes\cdots\otimes z}_{\text{$k$-times}} \dif z=0.
		\end{gather*}
		Then the following hold.
		\begin{itemize}
			\item Under assumptions (i), (ii), (iii'): given $u\in L^p(\R^d)$ and $\varphi\in W^{k,p'}(\R^d)$, one finds that for each $\eps>0$, there exists $\delta>0$ such that
			\[
			\left|\int_{\R^d}\lp\D_\delta u(x)\rp\varphi(x)\dif x
			-\int_{\R^d}u(x)\lp a\bdot\nabla\varphi(x)\rp\dif x\right|
			\le\eps\delta^{k-1};
			\]
			\item Under assumptions (i), (ii), (iii''): given $u\in L^p(\R^d)$ and $\varphi\in W^{k+1,p'}(\R^d)$, one finds that for each $\eps>0$, there exists $\delta>0$ such that
			\[
			\left|\int_{\R^d}\lp\D_\delta u(x)\rp\varphi(x)\dif x
			-\int_{\R^d}u(x)\lp a\bdot\nabla\varphi(x)\rp\dif x\right|
			\le\delta^k\|\varphi\|_{W^{k+1,p'}(\R^d)};
			\]
			\item Under assumptions (i), (ii), (iii''): given $u\in C^k(\R^d)$, one finds that for each $x\in\R^d$ and $\eps>0$, there exists $\delta>0$ such that
			\[
			|\D_\delta u(x)-a\bdot \nabla u(x)|\le\eps\delta^{k-1}.
			\]
		\end{itemize}
		
	\end{remark}

	\begin{remark}
		The point of Theorem~\ref{thm: convergence to classical derivative} and the previous remark is not to give a new way of approximating derivatives for smooth functions. Rather, the theorem shows that under fairly general conditions on $\mu_\delta$ and $u$, $\D_\delta u$ approximately matches the directional derivative $a\bdot \grad u$. Therefore, we can interpret $\D_\delta$ as behaving roughly like a classical derivative in that it captures something about the rate of change in a direction. This provides an extension of familiar calculus tools to irregular problems: $\D_\delta$ is defined on all of $L^p$, rather than just Sobolev functions, and there is no need for $\Omega_\delta$ to be regular for $\D_\delta$ to be a well-behaved operator. 
		
		This is a crucial point. Rather than working on, say, a fractional Sobolev space, the analysis of this paper is carried out entirely in $L^p$ spaces.
	\end{remark}
	
	Under appropriate hypotheses on $\mu_\delta$, we see that $\D_\delta$ approximates the derivative. Later, we will see it also satisfies integration by parts, meaning it shares some structural properties with the classical derivative as well. However, the following proposition highlights a key difference between the local and nonlocal derivatives: $\D_\delta u$ is often smoother than $u$, while $\grad u$ is typically rougher than $u$. This is related to behavior we will study in Section~\ref{sec: compact} and highlights a key feature of nonlocal modeling: By choosing $\mu_\delta$ appropriately, we can change the behavior of the operator $\D_\delta$ to fit the given setting. If we need the nonlocal derivative to be smooth while still approximating the classical derivative, pick $\mu_\delta$ to be antisymmetric, with zero higher order moments, and smooth enough. If instead we do not want $\D_\delta u$ to be regularizing, pick $\mu_\delta$ such that $\overline{\mu_\delta}\ne 0$.
	
	\begin{Prop}\label{prop: bonus regularity}
		Suppose that $\mu_\delta$ integrates to zero: $\int_{B_\delta(0)} \mu_\delta(z)\dif z = 0$. Fix $p\in[1,\infty]$, and let $u\in L^p(\Omega)$ be given.
		\begin{itemize}
			\item If $\mu_\delta \in L^q(\R^d)$ for $q\in (1,\infty)$, then $\D_\delta u \in L^r(\Omega)$, where $1+1/r = 1/p+1/q$.
			\item If $\mu_\delta \in C^k(\R^d)$ for some $k\in\N$, then $\D_\delta u \in C^k(\Omega)$.
		\end{itemize}
	\end{Prop}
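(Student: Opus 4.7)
The plan is to exploit the decomposition $\D_\delta = I_\delta - \overline{\mu_\delta} P$ recorded in equation~\eqref{eqt: decompose Ddelta}. Under the hypothesis $\overline{\mu_\delta}=0$, the multiplication piece vanishes and $\D_\delta u = I_\delta u$, which by Remark~\ref{rmk: I is crosscor} is the cross-correlation $u\star\mu_\delta$ (equivalently, the convolution $u\ast(\mu_\delta\circ s)$, where $s(x)=-x$). Both bullet points then reduce to standard regularity facts about convolutions, after first extending $u$ by zero outside $\Omega_\delta$ to obtain a function in $L^p(\R^d)$ with the same norm.

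For the first bullet, I would apply Young's convolution inequality directly to the extended $u$ and the kernel $\mu_\delta\circ s$ with exponents $p$ and $q$, which gives $u\ast(\mu_\delta\circ s)\in L^r(\R^d)$ together with the estimate $\|u\ast(\mu_\delta\circ s)\|_{L^r(\R^d)}\le \|u\|_{L^p(\R^d)}\|\mu_\delta\|_{L^q(\R^d)}$. Restricting to $\Omega$ produces the claimed $L^r$-bound on $\D_\delta u$. This strictly improves the $L^p$-boundedness from Proposition~\ref{prop: D is bdd} whenever $q>1$.

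For the second bullet, I would differentiate under the integral sign in the representation $\D_\delta u(x) = \int_{\Omega_\delta} u(y)\mu_\delta(y-x)\,\dif y$. For any multi-index $\alpha$ with $|\alpha|\le k$, one has the pointwise bound
\[
\bigl|\partial_x^\alpha\bigl[u(y)\mu_\delta(y-x)\bigr]\bigr|
\le |u(y)|\,\|\partial^\alpha\mu_\delta\|_{L^\infty(\R^d)}\,\chi_{\overline{B_\delta(x)}}(y),
\]
and, since $\supp(\mu_\delta)\subseteq\overline{B_\delta(0)}$ is compact, H\"older's inequality on the bounded set $\overline{B_\delta(x)}$ shows that this dominating function is integrable, with the bound uniform as $x$ ranges over a compact subset of $\Omega$. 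Dominated convergence then justifies passing each derivative inside the integral, so that $\partial^\alpha \D_\delta u(x) = (-1)^{|\alpha|}\int u(y)(\partial^\alpha\mu_\delta)(y-x)\,\dif y$; continuity in $x$ of this expression follows from the continuity of $\partial^\alpha\mu_\delta$ together with one more application of dominated convergence. Hence $\D_\delta u\in C^k(\Omega)$.

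The only real obstacle is the bookkeeping in the second part, namely producing an integrable dominating function for $\partial_x^\alpha[u(y)\mu_\delta(y-x)]$ that is uniform as $x$ varies locally in $\Omega$. Because $\mu_\delta$ and its derivatives up to order $k$ are continuous and compactly supported, the required uniform $L^\infty$-bounds are automatic, so no deeper machinery is needed.
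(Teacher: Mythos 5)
Your proposal is correct and follows essentially the same route as the paper's proof: once $\overline{\mu_\delta}=0$ annihilates the multiplication term in the decomposition $\D_\delta = I_\delta - \overline{\mu_\delta}P$, the operator is a cross-correlation, and the paper invokes exactly Young's inequality for the first bullet and ``a basic fact about convolutions'' (differentiation under the integral, which you carry out via dominated convergence) for the second. You have simply supplied the details the paper leaves implicit.
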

	\begin{proof}
		When $\mu_\delta$ is mean-free, then $\D_\delta$ is essentially a convolution operator. Thus, the first result follows from Young's Inequality and the second follows from a basic fact about convolutions.
	\end{proof}
	\begin{remark}
		The above proposition shows that $\D_\delta$ can sometimes improve integrability or smoothness. More general and refined results of this sort for integral operators have been well-studied. For example, one can specify conditions on $\mu_\delta$ to ensure that $\D_\delta u$ has a certain amount of H\"older regularity~\cite[p. 334]{kantorovich_functional_1982} or merely continuity (See ~\cite{graham_compactness_1979} or Theorem 34.10 of~\cite{serov_fourier_2017}). A more comprehensive list can be found on page 66 of~\cite{precup_methods_2002} and page 39 of~\cite{gripenberg_volterra_1990}. See also~\cite{webb_compactness_2022, lan_compactness_2020, lan_sufficient_2022}, which study integral operators in the context of Caputo and Riemann-Liouville fractional derivatives.
	\end{remark}

	\subsection{Examples}\label{sect: ex}
	We have seen that, for sufficiently small $\delta$ and nice functions $u$, $\D_\delta u$ acts like a directional derivative. In this section we present some concrete examples of how this operator acts on functions, focusing on the case of a one-dimensional domain. Given the discussion of Section~\ref{sect: symm}, it is important to study both antisymmetric and asymmetric choices of $\mu_\delta$.

	The convergence theorem above is stated for a general family of kernels $\set{\mu_\delta}_{\delta>0}$. In practice, typically one fixes a functional form of the kernel and rescales as $\delta$ shrinks. For example, one choice is to pick an odd polynomial $p(z)$ such that \[\int_{-\delta}^\delta \abs{zp(z)} \dif z = 1. \] Then define $\mu_\delta(z)$ to be $\frac1\delta p(z/\delta)$, times an appropriate indicator function. This satisfies the hypotheses of the theorem, so $\D_\delta\approx \od{}{x}$ for small $\delta$. 
	
	Another option is to pick an antisymmetric potential-type function, say \[\rho(z) = \frac1{\abs{z}^{\beta}}\frac{z}{\abs{z}}\chi_{B_\delta(0)}(z),\] where $0<\beta<1$. Then for each $\delta>0$, set $\mu_\delta(z)\defn C_\delta \rho(z)$, where $C_\delta$ is the following scaling constant:
	\[
	C_\delta\defn \left( \int_{-\delta}^\delta z\rho(z) \dif z \right)^{-1}.
	\]
	This ensures that, for any $\delta>0$, $\int_{-\delta}^\delta z\mu_\delta(z)\dif z =1$ and the conditions of the above convergence theorem are met. These potential-type kernels occur naturally in peridynamics~\cite{chen_selecting_2015}, and are similar to the kernels used to define fractional Sobolev-Slobodeckij spaces.
	
	\begin{remark}		
		It is worth noting that the norm of $\mu_\delta$ is unbounded as $\delta\to 0$. This fact is somewhat hidden in the scaling factor $C_\delta$. The scaling requirement in~\eqref{eqt: scaling 1} implies the magnitude of $\int_{B_\delta(0)} z\mu_\delta(z)\dif z$ must approach $\abs{a}$. Hence, $\mu_\delta\sim \delta^{-n-1}$ for $\delta>0$ small and so $\norm{\mu_\delta}_{L^1\left(B_\delta(0)\right)}\to \infty$ as $\delta\to 0$. This can cause difficulties in certain estimates, like when proving convergence of solutions to a local problem \cite[Section 4.5]{safarik_mathematical_2024}.
	\end{remark}	
	
	\begin{ex}\label{ex: muti-D potential}
		We provide a general family of sign-changing kernels with an integrable singularity. Let $a\in\R^d$ and $\beta>1-d$ be given, and suppose that $\theta\in L^\infty(\partial B_1(0))$ satisfies the following: there exists $r>0$ such that
		\[
		\int_{\partial B_1(0)}\theta(\omega)\omega\dif\sigma(\omega)=ra.
		\]
		Here $\sigma$ is the surface measure on the unit sphere $\partial B_1(0)$. For each $\delta>0$, define $\mu_\delta\in L^1(\R^d)$ by
		\[
		\mu_\delta(z)
		=\lp\frac{d+\beta}{r\delta^{d+\beta}}\rp\theta\lp\frac{z}{|z|}\rp|z|^{\beta-1}\chi_{B_\delta(0)}(z).
		\]
		Thus, the family $\{\mu_\delta\}_{\delta>0}$ satisfies (iii'') in Theorem~\ref{thm: convergence to classical derivative}. To verify (i), we compute
		\[
		\int_{\R^d}|\mu_\delta(z)||z|\dif z
		\le\lp\frac{d+\beta}{r\delta^{d+\beta}}\rp\|\theta\|_{L^\infty(B_1(0))}
		\int_{B_\delta(0)}|z|^{\beta}\dif z
		\le C\delta^{-d-\beta}\int_0^\delta s^{d+\beta-1}\dif s
		\le \frac{C}{d+\beta}.
		\]
		For assumption (ii), we find
		\begin{align*}
			\int_{\R^d}\mu_\delta(z)z\dif z
			=&\frac{d+\beta}{r\delta^{d+\beta}}
			\int_0^\delta\int_{\partial B_1(0)}\theta(\omega)\lp s^{\beta-1}\rp\lp s\omega\rp s^{d-1}
			\dif\omega\dif s
			=\frac{d+\beta}{r\delta^{d+\beta}}
			\int_0^\delta s^{d+\beta-1}(ra)\dif s\\
			=&a.
		\end{align*}
		
		For a more specific example, suppose $a=\hat{e}_1\in\partial B_1(0)$, the unit basis vector in the $1$-st coordinate direction. With $0<t<1$, consider the spherical sectors
		\[
		S^+_{t}=\{\omega\in\partial B_1(0): \omega\bdot\hat{e}_1=\omega_1>t\}
		\quad\text{ and }\quad
		S^-_{t}=\{\omega\in\partial B_1(0): \omega\bdot\hat{e}_1=\omega_1<-t\}.
		\]
		Define $\theta=\chi_{S^+_t}$. We find
		\[
		\int_{\partial B_1(0)}\theta(\omega)\omega\dif\sigma(\omega)
		=\int_{\partial B_1(0)}\theta(\omega)\ls\omega_1\hat{e}_1+(\omega-\omega_1\hat{e}_1)\rs\dif\sigma(\omega),
		\]
		Here $(\omega-\omega_1\hat{e}_1)$ is the component of $\omega$ perpendicular to $\hat{e}_1$. If $\omega\in S^+_t$, then so is $\omega_1\hat{e}_1-(\omega-\omega_1\hat{e}_1)$. We conclude that
		\[
		\int_{\partial B_1(0)}\theta(\omega)\omega\dif\sigma(\omega)
		=\lp\int_{S^+_t}\omega_1\dif\sigma(\omega)\rp\hat{e}_1=r\hat{e}_1.
		\]
		The associated $\D_\delta$ provides a nonlocal operator approximating differentiation in the $1$-st coordinate direction. A similar computation shows that the antisymmetric kernel associated with $\theta=\chi_{S^+_t}-\chi_{S^-_t}$ can also be used.
	\end{ex}
	
	The point of the operator $\D_\delta$ is that it ``acts like a derivative" for nice functions while still exhibiting nice operator-theoretic properties for non-smooth functions. The next example is a particular case of Example~\ref{ex: muti-D potential} and illustrates what happens if $\D_\delta$ is applied to a function which is not classically differentiable.

	\begin{ex}\label{ex: potential}   
		Set $\delta = 0.05$ and consider a potential-type kernel of the sort introduced at the start of this subsection:
		\[
		\mu_\delta(z)\defn C_\delta \frac{z}{\abs{z}}\frac{1}{\abs{z}^{2/3}}\chi_{B_\delta(0)}(z).
		\]
		This is a special case of Example~\ref{ex: muti-D potential}, with $\beta = 1/3$ and $\theta$ is defined on just two points (the boundary of the one-dimensional ball). Here, we take $\theta(1)=1$, $\theta(-1)=-1$.
		
		This function is antisymmetric, and $C_\delta$ is a scaling constant (in this case, $C_\delta \approx 36.1922$). Figures~\ref{fig: abs val} and~\ref{fig: cusp} show the classical and nonlocal derivatives of a function with a corner and one with a cusp, respectively. Note that in regions where the function is smooth, the local and nonlocal derivatives agree nearly exactly. But near the point where the function fails to be differentiable, the graph of the nonlocal derivative stays continuous and bounded. In this sense, the nonlocal derivative $\D_\delta$ is nearly a generalization of the classical, local derivative. The plots also reinforce the perspective of interpreting $\D_\delta u$ as a moving weighted average of the slopes of $u$.
		\begin{figure}[!h]
			\centering
			\includegraphics[width=.4\textwidth]{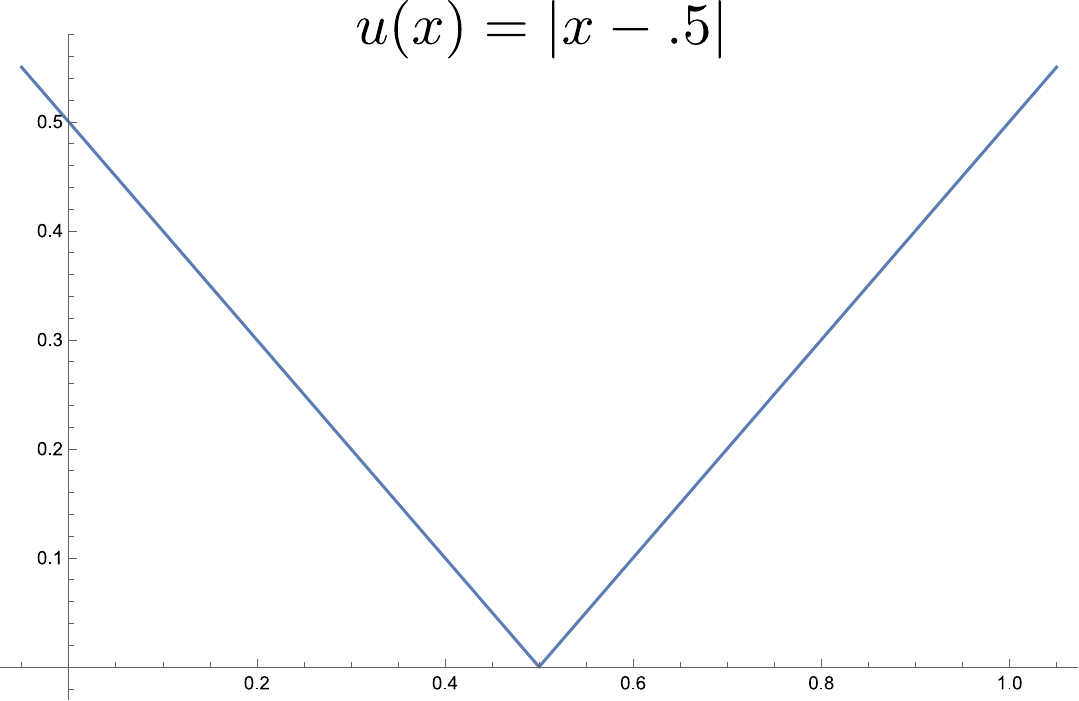}      
			\hspace{15pt}
			\includegraphics[width=.5\textwidth]{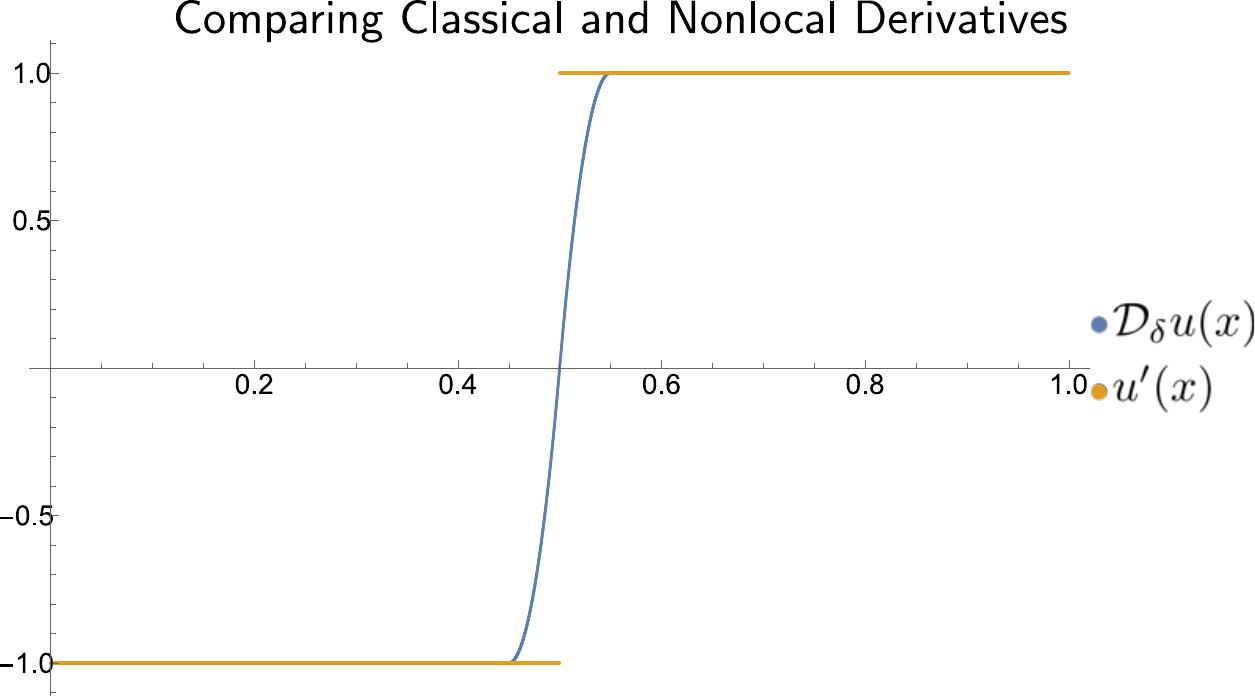}
			\caption{On the left is a plot of the function $u(x)=\abs{x-.5}$, which is not differentiable at $x=0.5$. The right plot shows the local derivative of $u$ in orange and the nonlocal derivative described in Example~\ref{ex: potential} (approximated numerically) in blue.}
			\label{fig: abs val}
		\end{figure}
		\begin{figure}[!h]
			\centering
			\includegraphics[width=.4\textwidth]{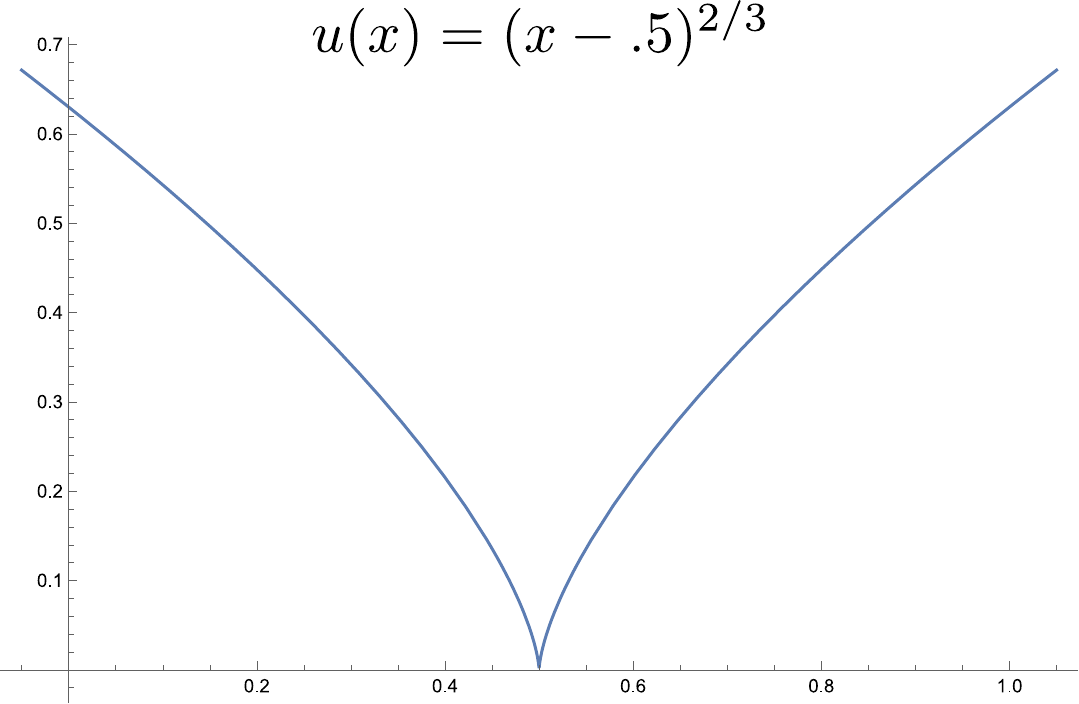}
			\hspace{15pt}
			\includegraphics[width=.5\textwidth]{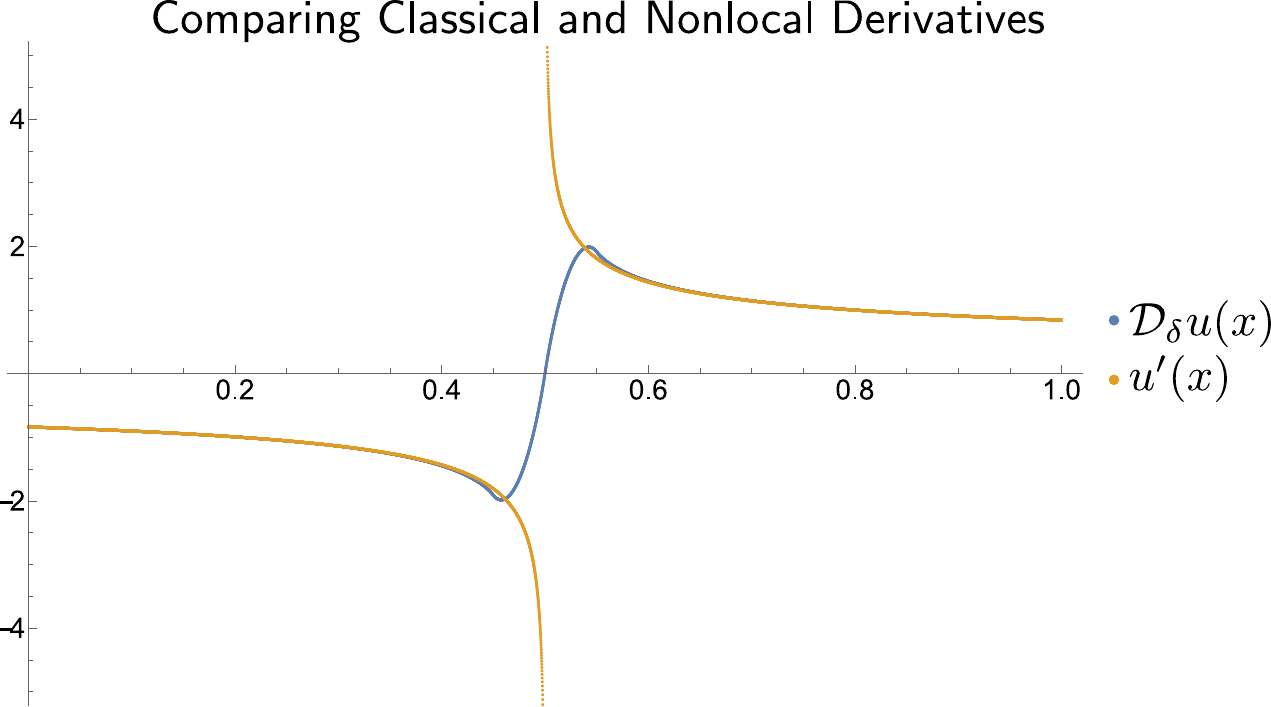}
			\caption{On the left is a plot of the function $u(x)=\abs{x-.5}^{2/3}$, which is not differentiable at $x=0.5$. The right plot shows the local derivative of $u$ in orange and the nonlocal derivative described in Example~\ref{ex: potential} (approximated numerically) in blue.}
			\label{fig: cusp}
		\end{figure}
	\end{ex}
	
	To obtain the pointwise estimate $\abs{\D_\delta u(x) - u'(x)} < C\delta^\alpha$ in part (c) of Theorem~\ref{thm: convergence to classical derivative}, we needed to assume that $u'\in C^{0,\alpha}(\R)$. Next, we give an example where $\abs{\D_\delta u(0) - u'(0)} < C \delta$ despite the discontinuity in $u'$ at zero.
	
	\begin{ex}\label{ex: oscil} 
		Let $\Omega=(-1,1)$ and consider any $\delta\in(0,1)$. Consider the function $u: \Omega_\delta\to\R$
		\[
		u(x) = \begin{dcases}
			x^2\sin\left(\frac{1}{x}\right), & x\ne 0,\\
			0, & x=0.
		\end{dcases}
		\]
		Then $u$ is differentiable on $\dom$, with $u'(0)=0$, and that $u'$ is discontinuous.
		
		Let $\mu_\delta(z)$ denote the piecewise constant, antisymmetric kernel $\mu_\delta(z)= \frac{1}{\delta^2}\mathop{\mathrm{sign}}(z)\chi_{B_\delta(0)}(z)$. This is a particular case of Example~\ref{ex: muti-D potential}, with $\beta = 1$ and taking $\theta:\set{-1,1}\to\set{-1,1}$ to be the identity map.

		After a change of variables, we find
		\begin{align*}
			\D_{\delta} u(0)
			&=\frac 2{\delta^2} \int_{0}^{\delta} y^2\sin\left(\frac 1y\right) \dif y.
		\end{align*}
		This yields the upper bound: $\abs{\D_{\delta} u(0) } \le\frac{2}{3}\delta$.
	\end{ex}
	
	\begin{ex}\label{ex: zero}
		The previous example explored behavior of an antisymmetric kernel as $\delta$ goes to zero. What if $\delta$ is fixed? Continuing with the antisymmetric piecewise constant kernel of Example~\ref{ex: oscil}, consider the function $h(x) = \sin(\frac{2\pi}{\delta} x)$. The nonlocal derivative ``sees" a ball of radius $\delta$ at each point, meaning that $h$ completes 2 full periods within $(x-\delta,x+\delta)$. As the kernel $\mu_\delta$ is antisymmetric and weights every point equally, this means that the contributions from the left and right precisely cancel:
		\[
		\D_\delta h(x) = - \int_{x-\delta}^x h(y) \dif y + \int_x^{x+\delta} h(y)\dif y =0.
		\]
		Thus, the nonlocal derivative of $h$, for the given choice of kernel, is identically zero. Contrast this with the classical derivative, where the only functions with zero derivative are constants. 
		
		This is in part a consequence of working on bounded domains. In~\cite{haar_new_2022}, it is shown that the zero set of the operator $\D_\delta: L^p(\R)\to L^p(\R)$ contains only the zero function. See Section~\ref{sect: symm} and Corollary~\ref{coro: no poincare} for discussion and implications of the fact that the null space of $\D_\delta$ is much larger than the constant functions.
	\end{ex}
	
	Suppose we want $\D_\delta$ to converge to the local derivative. We may initially guess that the symmetric part of the kernel $\mu_\delta$ must get small as $\delta$ shrinks. This next example illustrates that this is not necessarily the case.
	
	\begin{ex}\label{ex: asymm}
			For each $\delta>0$, let $\mu_{\delta}:\R\to\R$ be a one-dimensional special case of Example \ref{ex: muti-D potential}, with $\beta = 1/2$ and $\theta:\set{-1,1}\to\set{0,1}$ given by $\theta(1) = 1$ and $\theta(-1) =0$.
			Note that each function $\mu_\delta$ can be decomposed into symmetric and antisymmetric parts:
			\[
			\mu_{\text{s}}(z)\defn \begin{dcases}
				\frac3{4\delta^{3/2} \abs{z}^{1/2}}, & -\delta<z<\delta\\
				0, & \text{otherwise}
			\end{dcases}, \ \ \ \ 
			\mu_{\text{a}}(z)\defn \begin{dcases}
				\frac{-3}{4\delta^{3/2} \abs{z}^{1/2}}, & -\delta<z<0\\
				\frac3{4\delta^{3/2} \abs{z}^{1/2}}, & 0<z<\delta\\
				0, & \t{otherwise}.
			\end{dcases}
			\]
			
			A straightforward computation shows that, for every positive $\delta$,
			\begin{gather*}
				\int_{-\delta}^\delta \abs{\mu_\delta(z)}\dif z =\frac{3}{\delta},\ \ \ 
				\int_{-\delta}^{\delta} z\mu_\delta(z)\dif z = 1= \int_{-\delta}^{\delta}\abs{z\mu_\delta(z)}\dif z, \  \text{ and } \ 
				\norm{\mu_{\text{s}}}_{L^1\left(\R\right)} =  \frac3\delta.
			\end{gather*}
			Hence, the conditions of the convergence theorem are met. However, the symmetric part does not get small; in fact, its norm grows without bound as $\delta\to0^+$. A careful look at the Taylor series argument, in Section~\ref{sect: motiv}, shows one only needs the second moment of the symmetric part to go to zero:
			$
			\lim_{\delta\to0^+}\int_{-\delta}^{\delta}  z^2 \mu_{\text{s}}(z)\dif z =0,
			$
			which is a weaker condition. In the example above, we find $
			\int_{-\delta}^{\delta}  z^2 \mu_{\text{s}}(z)\dif z = \frac3{5} \delta\to0$ as $\delta\to0^+$.
	\end{ex}

	\begin{ex}\label{ex: poly}
		In the previous example of an asymmetric kernel, the support of $\mu_\delta$ was contained in a half space. This is not needed, as this polynomial example shows.
		
		For each $\delta\in(0,2)$, define
		\[
		\mu_\delta(z)\defn \begin{dcases}
			\textstyle{\frac{225}{128}\frac{1}\delta + \frac{3675}{128}\frac{1}{\delta^3}z-\frac{525}{64}\frac{1}{\delta^3}z^2-\frac{6615}{64}\frac{1}{\delta^5}z^3+\frac{945}{128}\frac{1}{\delta^5}z^4+\frac{10395}{128}\frac{1}{\delta^7}z^5}, & -\delta\le z\le \delta\\
			0, &\text{otherwise.}
		\end{dcases}
		\]
		The coefficients are chosen so that, for all $\delta>0$,
		\[
		\int_{-\delta}^\delta \mu_\delta(z)\dif z = 1 =\int_{-\delta}^\delta z\mu_\delta(z)\dif z, \text{ and } \int_{-\delta}^\delta z^j\mu_\delta(z)\dif z =0, \text{ for }j=2,3,4.
		\]
		Hypothesis (i) of Theorem \ref{thm: convergence to classical derivative} for all $\delta<2$, meaning the theorem ensures convergence to the local derivative. In fact, we expect rapid convergence to the local derivative in light of Remark \ref{rmk: higher order}.
	\end{ex}

	\begin{ex}\label{ex: DIC}
		Although we have focused on the symmetry of $\mu_\delta$, Corollary~\ref{coro: no poincare} shows that another important class of kernels to consider is the mean-free functions. Since we integrate on a symmetric domain, this class includes all antisymmetric kernels. But some applications use kernels which integrate to zero without assuming antisymmetry. See for example~\cite{lehoucq_nonlocal_2014}, which uses a piecewise-linear, asymmetric kernel to define a nonlocal strain in digital image correlation. The resulting nonlocal derivative will converge to the classical derivative for smooth functions, but it will not typically satisfy integration by parts or a nonlocal Poincar\'e inequality.
	\end{ex}

	\subsection{Vector Calculus}\label{sect: vectors}
	
	Sections~\ref{sect: convergence} and~\ref{sect: ex} justify interpreting the operator $\D_\delta$ as a directional derivative. This means we can form linear combinations of these operators, as well as vector-valued versions, analogous to classical vector calculus. 
	
	For each $k=1,\dots, d$, denote the $k$-th standard basis vector in $\R^d$ by $\hat{e}_k\in\R^d$. Given $m\in\mathbb{N}$ and $p\in[1,\infty]$, let  $L^p\left(\Omega_\delta\right)^m$ denote the direct sum of $L^p(\Omega_\delta)$ with itself, $m$ times.
	
	Suppose $\set{\vec\mu_{\delta}}_{\delta>0}=\set{(\mu_{\delta,1},\dots,\mu_{\delta,m})}_{\delta>0}\subseteq L^1(\R^d)^m$ such that $\supp\vec\mu_\delta\subseteq\overline{B_\delta(0)}$. For each $k=1,\dots,m$, let $\overline{\D_{\delta,k}}: L^p\left(\Omega_\delta\right)\to L^p\left(\Omega_\delta\right)$ denote the extended nonlocal derivative with kernel $\mu_{\delta, k}$.
	
	\begin{definition}\label{def: nonlocal differential operators}\label{def: div and grad}
		Let $c_1,\dots,c_m\in\R$ be given. We introduce three new operators:
		\begin{gather*}
			\mathscr A_\delta: L^p(\Omega_\delta)\to L^p(\Omega_\delta), \ \ \mathscr A_\delta u \defn \sum_{k=1}^m c_k\overline{\D_{\delta,k}} u\\
			\mathscr G_\delta: L^p(\Omega_\delta)\to L^p(\Omega_\delta)^m, \ \ \G_\delta u \defn \left(
			c_1 \overline{\D_{\delta,1}} u,
			c_2 \overline{\D_{\delta,2}} u,
			\dots,
			c_m \overline{\D_{\delta,m}} u
			\right)\\
			\mathscr{D}_\delta : L^p(\Omega_\delta)^m \to L^p(\Omega_\delta), \ \ \mathscr D_\delta \vec u = \mathscr D_\delta \left(
			u_1,
			u_2,
			\dots,
			u_m
			\right) = \sum_{k=1}^m c_k \overline{\D_{\delta,k}} u_k.
		\end{gather*}
		
		We refer to $\mathscr A_\delta$ as a first-order, \textit{linear nonlocal differential operator}. If $m=d$, $c_k=1$, and the directional derivatives are taken along coordinate directions for all $k\in\set{1,2,\dots,d}$, then $\mathscr D_\delta$ is can be identified as a \textit{nonlocal divergence} operator and $\G_\delta$ as a \textit{nonlocal gradient}. We use $\mathscr T_\delta$ as a placeholder, to refer to any of these nonlocal operators. 
	\end{definition}
	
	The next proposition says we can view these three operators as nonlocal analogs of familiar classical differential operators. 
	
	\begin{Prop}\label{prop: convergence of vect ops}
		Suppose that $\Omega \subseteq\R^d$ is open and bounded, and $\Omega_\delta$ is an extension domain for each $\delta>0$. Using the assumptions and notation above, further suppose that for each $k=1,2,\dots,d$, the family of functions $\set{\mu_{\delta,k}}_{\delta>0}$ satisfies (i), (ii), and (iii') of Theorem~\ref{thm: convergence to classical derivative}, with $a=\hat{e}_k$.
		
		Then $\mathscr A_\delta$, $\G_\delta$, and $\mathscr D_\delta$ are bounded linear operators on their respective domains. Additionally, if $u\in W^{1,p}\left(\Omega_\delta\right)$ and $Au \defn \sum_{k=1}^m c_k \partial_k u$, then as $\delta\to0^+$,
		\[
		\norm{\mathscr A_\delta u - Au}_{L^p(\dom)} \to 0, \ \ \norm{ \mathscr G_\delta u - \grad u}_{L^p(\dom)^d} \to 0, \ \text{ and } \ \norm{\mathscr D_\delta \vec u - \mathop{\mathrm{div}} \vec u}_{L^p(\dom)}\to 0,
		\]
	\end{Prop}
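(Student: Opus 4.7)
The plan is to reduce everything to the scalar convergence result, Theorem~\ref{thm: convergence to classical derivative}(b), applied componentwise. Boundedness and linearity are automatic: each of $\mathscr{A}_\delta$, $\mathscr{G}_\delta$, $\mathscr{D}_\delta$ is a finite $\R$-linear combination (or tuple) of the extended operators $\overline{\D_{\delta,k}}$, and Proposition~\ref{prop: D is bdd} already established that each such $\overline{\D_{\delta,k}}: L^p(\Omega_\delta)\to L^p(\Omega_\delta)$ is bounded linear, with operator norm controlled by $2\|\mu_{\delta,k}\|_{L^1(\R^d)}$. A triangle inequality on the defining sums (and, for $\mathscr{G}_\delta$, taking the $L^p(\Omega_\delta)^m$ norm componentwise) then gives bounded linearity of $\mathscr{A}_\delta$, $\mathscr{G}_\delta$, $\mathscr{D}_\delta$.

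For the convergence statements, I would first handle the mismatch between the ambient space in Theorem~\ref{thm: convergence to classical derivative}, which is $\R^d$, and the functional-analytic setting here, where $u\in W^{1,p}(\Omega_\delta)$. Using the hypothesis that $\Omega_\delta$ is an extension domain, invoke Remark~\ref{rmk: assume R wlog} to produce $\bar u\in W^{1,p}(\R^d)$ that agrees with $u$ on $\Omega_\delta$ and satisfies $\|\bar u\|_{W^{1,p}(\R^d)}\le C_u\|u\|_{W^{1,p}(\Omega_\delta)}$. For every $x\in\Omega$, the ball $B_\delta(x)$ lies inside $\Omega_\delta$, so $\D_{\delta,k}u(x)=\D_{\delta,k}\bar u(x)$ and $\overline{\D_{\delta,k}}u(x)=\D_{\delta,k}\bar u(x)$ as well (the extension by zero appearing in $\overline{\D_{\delta,k}}$ only alters values on $\Gamma_\delta$, which do not enter the $L^p(\Omega)$ norm).

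Now apply Theorem~\ref{thm: convergence to classical derivative}(b) to $\bar u$ for each fixed $k\in\{1,\dots,m\}$: since $\{\mu_{\delta,k}\}_{\delta>0}$ satisfies (i), (ii), (iii') with $a=\hat e_k$, we obtain
\[
\|\D_{\delta,k}\bar u - \partial_k\bar u\|_{L^p(\R^d)}\longrightarrow 0
\quad\text{as } \delta\to 0^+,
\]
and in particular $\|\overline{\D_{\delta,k}}u - \partial_k u\|_{L^p(\Omega)}\to 0$. For $\mathscr{A}_\delta$, the triangle inequality
\[
\|\mathscr{A}_\delta u - Au\|_{L^p(\Omega)}
\le \sum_{k=1}^m |c_k|\,\|\overline{\D_{\delta,k}}u - \partial_k u\|_{L^p(\Omega)}
\]
then gives the desired convergence. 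The result for $\mathscr{G}_\delta$ follows by taking the $L^p(\Omega)^d$ norm of the tuple and using the same componentwise estimate. For $\mathscr{D}_\delta\vec u$ with $\vec u=(u_1,\dots,u_m)\in W^{1,p}(\Omega_\delta)^m$, apply the same argument to each scalar $u_k$ separately and sum.

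No step here is genuinely hard once the extension is in place; the main bookkeeping obstacle is the domain mismatch between the $\R^d$-based Theorem~\ref{thm: convergence to classical derivative}, the extension-by-zero built into $\overline{\D_{\delta,k}}$, and the $L^p(\Omega)$ norm in the conclusion. Being explicit that $\overline{\D_{\delta,k}}u$ and $\D_{\delta,k}\bar u$ coincide a.e.\ on $\Omega$ is what makes the reduction legitimate and lets the one-line triangle-inequality bound finish the proof.
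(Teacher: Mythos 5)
Your proof is correct and takes essentially the same approach as the paper: reduce both boundedness and convergence to the scalar results (Proposition~\ref{prop: D is bdd} and Theorem~\ref{thm: convergence to classical derivative}) applied componentwise, then sum with the triangle inequality. The paper's own proof is just a two-sentence citation of those results; you have simply spelled out the extension-domain step and the agreement of $\overline{\D_{\delta,k}}u$ with $\D_{\delta,k}\bar u$ on $\Omega$, which the paper leaves implicit (via Remark~\ref{rmk: assume R wlog}).
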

	
	\begin{proof}
		The fact that $\mathscr A_\delta$, $\G_\delta$, and $\mathscr D_\delta$ are bounded and linear follows immediately from Proposition~\ref{prop: D is bdd}, since they are finite combinations of the operators $\D_{\delta,k}$. The convergence to the classical, local analog is a direct consequence of Theorem~\ref{thm: convergence to classical derivative}.
	\end{proof}
	
	\begin{remark}
		Note that the kernels $\mu_{\delta,k}$ need not have the same functional form for all $k$. More generally, the proposition holds for tensor-combinations of first-order nonlocal derivatives.
	\end{remark}
	
	We next show that the nonlocal gradient arises in the context of peridynamics. This provides a key piece of motivation for studying these operators: the nonlocal gradient is well-defined for any $L^p$ deformation and can therefore model fracture and sharp corners without modification.
	
	\begin{ex}\label{ex: peridynamic correspondence}
		In state-based peridynamics~\cite{silling_peridynamic_2007}, a nonlocal reformulation of continuum mechanics, a key object is the \textit{deformation vector state field} $\underline{\mathbf{Y}}$. For each point $\bx$ and each time $t$, $\underline{\mathbf{Y}}[\bx,t]$ is a \textit{vector state}, or a vector-valued map defined on $\overline{B_\delta(0)}$. As the body is deformed, suppose that the point $\bx$ is sent to the point $\by(\bx,t)$. Then for any other point $\bx'\in \overline{B_\delta(\bx)}$, $\underline{\mathbf{Y}}[\bx,t]$ maps the bond $\bx'-\bx$ to the corresponding deformed bond: $    \underline{\mathbf{Y}}[\bx,t]\langle\bx'-\bx\rangle \defn \by(\bx',t) - \by(\bx,t).
		$
		This map need not be linear or continuous, allowing for more general deformations than the classical framework.
		
		The theory of \textit{correspondence} provides a method of roughly translating between a classical material model and a state-based peridynamic model. Given a tensor, one can \textit{expand} it to a state, and, given a state, one can \textit{reduce} to a tensor (reduction is the inverse of expansion). In this setting, one defines a tensor $\overline{\mathbf F}$ that plays a role similar to the classical deformation gradient. Given a deformation $\underline{\mathbf{Y}}$, the \textit{approximate deformation gradient} $\overline{\mathbf F}$ is the reduction of $\underline{\mathbf{Y}}$. First we write this in the usual notation of peridynamics, where $\xi$ denotes the bond between $\bx'$ and $\bx$, then we write the same expression in the notation of this paper:
		\begin{align*}
			\overline{\mathbf F}(\bx,t) &= \left(\int_{\mathcal H} \underline{\omega}\langle\xi\rangle\underline{\mathbf{Y}}[\bx,t]\langle\xi\rangle \otimes \xi \dif {V_{\xi}}\right) \left(\int_{\mathcal H} \underline{\omega}\langle\xi\rangle\xi\otimes\xi\dif{V_\xi}\right)^{-1} \\
			&=  \left(\int_{B_\delta(0)} [\vec y(x+z,t)-\vec y(x,t)] \otimes\vec{\mu_\delta}(z) \dif {z}\right) \left(\int_{B_\delta(0)}\vec{\mu_\delta}(z)\otimes z \dif z\right)^{-1}.
		\end{align*}
		where $\vec{\mu_\delta}(z)\defn \underline{\omega}\langle z\rangle z$ can be identified as the kernel of the nonlocal operator, and $\underline{\omega}$ is the \textit{influence function}, which determines which bonds contribute to the force acting at a point. It is assumed in peridynamics that $\underline\omega:\overline{B_\delta(0)}\to[0,\infty)$ is strictly positive on a set of positive measure, and to fit into the current framework we further require $\underline{\omega}\langle z\rangle z$ to be integrable. In the last equation above, the first integral can be seen as a nonlocal gradient of the deformation map. Each component of the tensor will be a nonlocal directional derivative of some coordinate function $y_j$, in the direction of a coordinate unit vector $\hat e_k$. The second integral is called the \textit{shape tensor} in peridynamics, and may be complicated in general. But if we choose $\vec \mu_\delta$ such that $\int_{B_\delta(0)} \mu_{\delta,k}(z) z_j \dif z$ converges to the Kronecker delta $\delta_{jk}$ as the nonlocality vanishes, then the second integral converges to the identity as $\delta\to0^+$.
		
		Hence, $\overline{\mathbf F}$ is like a nonlocal version of the classical deformation gradient, recovering the classical picture within the peridynamic framework. If the components $\mu_{\delta,k}$ satisfy the hypotheses of Theorem~\ref{thm: convergence to classical derivative}, then it can be seen as an approximation of the distributional derivative of any $L^p$ deformation.
	\end{ex}
	
	\begin{remark}\label{rmk: not derivs}
		We have seen that, under mild scaling requirements, the operator $\D_\delta$ acts like a derivative. However, the later results of this paper do not assume that these requirements are satisfied. Therefore, we can think of these nonlocal operators as more general integral operators that are capturing different features of the function they act on. This flexibility is part of what makes nonlocal modeling so powerful. Additionally, there is no need to require $m=d$ in the definition of $\G_\delta$: we may want to model more features than there are dimensions if $\D_\delta$ is being used as more than a simple directional derivative.
	\end{remark}
	
	\section{Adjoints and Integration By Parts}\label{sect: adjoint}
	In this section we derive an expression for the (Banach space) adjoints of the operators discussed in Section~\ref{sect: Operator}. We also obtain a nonlocal analog for integration by parts (Theorem \ref{thm: by parts}). From this, we derive a nonlocal analog of both Green's Identity (Remark \ref{rmk: greens}) and the Divergence Theorem (Corollary \ref{prop: div grad adj}). These results further reinforce the idea that $\D_\delta$ is an integral operator that ``acts like a derivative." The characterization of the adjoint and integration by parts will also be useful for the variational problems studied in Part II.
	
	First we introduce some notation. 
	\begin{definition}\label{def: pairing}
		If $p\in[1,\infty)$ and $p'$ is its H\"older conjugate ($\frac1p+\frac1{p'}=1$), we refer to the following bilinear form $\langle\cdot,\cdot\rangle:L^p( \Omega_\delta) \times L^{p'}( \Omega_\delta) \to \R$ as a \textit{duality pairing}:
		$
		\langle u,v\rangle\defn \int_{\Omega_\delta} u(x)v(x)\dif x.
		$
		For $\vec u \in L^p(\Omega_\delta)^m$ and $\vec v\in L^{p'}(\Omega_\delta)^m$, we instead use $\langle \vec u, \vec v \rangle_{L^p(\Omega_\delta)^m} = \sum_{k=1}^m \langle u_k, v_k \rangle_{L^p(\Omega_\delta)}$.
		
		We say that a bounded linear operator $S^*: L^p( \Omega_\delta) \to L^p( \Omega_\delta)$ is the \textit{adjoint} of $S: L^{p'}( \Omega_\delta)\to L^{p'}( \Omega_\delta)$ if
		$
		\langle S^* u, v \rangle = \langle u, S v\rangle,$ for all $u\in L^p\left( \Omega_\delta\right)$, and all $v\in L^{p'}\left( \Omega_\delta\right)$.	
	\end{definition}
	
	A main result of this section is establishing the form of the adjoint for the operator $\overline{\D_\delta}$. As a corollary, we obtain a formula analogous to classical integration by parts. The result is similar to existing work (e.g, Theorem 1.4 of~\cite{mengesha_localization_2015}, Proposition 5 of~\cite{haar_new_2022}, Theorem 2.7 of~\cite{mengesha_characterization_2016} and Theorem 3.2 of~\cite{bellido_non-local_2023}). Here, however, we focus on bounded domains, we do not assume $\mu_\delta$ is antisymmetric, and we allow $p\ne 2$.
	
	\begin{thm}\label{thm: adjoint}
		Suppose $p\in [1,\infty)$. If $\overline{\D_\delta}$ acts on $L^{p'}\left(\Omega_\delta\right)$, define an operator $\overline{\D_\delta}^*$ on $L^p\left(\Omega_\delta\right)$ by
		\begin{equation}\label{eqt: adjoint defn}
			\overline{\D_\delta}^* u(x) =\int_{\Omega_\delta} u(y)\mu_\delta(x-y)\chi_{B_\delta(x)}(y)\chi_{\Omega}(y) \dif y - u(x) \chi_\Omega(x)\int_{B_\delta(0)}\mu_\delta(z)\dif z.	    
		\end{equation}
		Then $\overline{\D_\delta}^*$ is a bounded linear operator which is the adjoint of $\overline{\D_\delta}$ in the sense of Definition~\ref{def: pairing}.
	\end{thm}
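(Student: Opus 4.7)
The plan is to compute the pairing $\langle u,\overline{\D_\delta} v\rangle$ directly by inserting the definition~\eqref{eqt: extended domain} of $\overline{\D_\delta}$, swapping the order of integration, and relabeling variables so as to recognize the result as $\langle \overline{\D_\delta}^* u,v\rangle$ with $\overline{\D_\delta}^*$ given by~\eqref{eqt: adjoint defn}. The only analytic input beyond bookkeeping is a Fubini step; this is justified because Proposition~\ref{prop: D is bdd} plus H\"older's inequality give absolute integrability of $(x,y)\mapsto u(x)v(y)\mu_\delta(y-x)\chi_{B_\delta(x)}(y)\chi_\Omega(x)$ on $\Omega_\delta\times\Omega_\delta$ whenever $u\in L^p(\Omega_\delta)$ and $v\in L^{p'}(\Omega_\delta)$.

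Using the decomposition $\overline{\D_\delta}=\overline{I_\delta}-\overline{\mu_\delta}EP$ of~\eqref{eqt: decompose extended domain}, the pairing splits as
\[
\langle u,\overline{\D_\delta} v\rangle
=\int_{\Omega_\delta}\int_{\Omega_\delta}u(x)v(y)\mu_\delta(y-x)\chi_{B_\delta(x)}(y)\chi_\Omega(x)\,dy\,dx
-\overline{\mu_\delta}\int_{\Omega_\delta}u(x)v(x)\chi_\Omega(x)\,dx.
\]
For the first (cross-correlation) term, Fubini's theorem swaps the order of integration; then I relabel the dummy variables $x\leftrightarrow y$ and use the symmetry $\chi_{B_\delta(x)}(y)=\chi_{B_\delta(y)}(x)$ of the Euclidean distance to rewrite it as
\[
\int_{\Omega_\delta}v(x)\left[\int_{\Omega_\delta}u(y)\mu_\delta(x-y)\chi_{B_\delta(x)}(y)\chi_\Omega(y)\,dy\right]dx.
\]
Note that the argument of $\mu_\delta$ has flipped from $y-x$ to $x-y$ and the characteristic function $\chi_\Omega$ has attached to $y$ rather than $x$; these are exactly the features that distinguish $\overline{\D_\delta}^*$ from $\overline{\D_\delta}$ in~\eqref{eqt: adjoint defn}. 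Combining this with the diagonal term recovers $\int_{\Omega_\delta}\overline{\D_\delta}^* u(x)\,v(x)\,dx=\langle\overline{\D_\delta}^* u,v\rangle$.

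It remains to verify that $\overline{\D_\delta}^*$ is itself a bounded linear operator on $L^p(\Omega_\delta)$. Linearity is immediate from the formula. For boundedness, the integral-operator piece of $\overline{\D_\delta}^*$ is again a cross-correlation with $\mu_\delta$ (up to the reflection $z\mapsto -z$), so Young's inequality gives an $L^p$-bound by $\|\mu_\delta\|_{L^1(\R^d)}\|u\|_{L^p(\Omega_\delta)}$; the multiplication piece $u\mapsto \overline{\mu_\delta}u\chi_\Omega$ is bounded by $|\overline{\mu_\delta}|\|u\|_{L^p(\Omega_\delta)}$. Together these reproduce the same operator norm bound $2\|\mu_\delta\|_{L^1(\R^d)}$ obtained for $\overline{\D_\delta}$ in Proposition~\ref{prop: D is bdd}. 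I do not anticipate a substantive obstacle: the proof is essentially bookkeeping, with the only subtle step being the careful tracking of which argument of $\mu_\delta$ and which characteristic function move where under the variable swap.
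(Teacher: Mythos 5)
Your proposal is correct and mirrors the paper's proof: both use the decomposition $\overline{\D_\delta}=\overline{I_\delta}-\overline{\mu_\delta}EP$ and handle the integral-operator piece and the multiplication piece separately, arriving at the same kernel-transposition and self-adjointness facts. The only difference is that where the paper cites Conway (Example VI.1.6) for the adjoint of $\overline{I_\delta}$, you carry out the Fubini--relabel computation by hand, making the argument self-contained but not changing the underlying strategy.
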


	\begin{proof}
		The operator $\overline{\D_\delta}^*$ is bounded and linear for the same reason $\overline{\D_\delta}$ is: it is the sum of a bounded integral operator and a multiplication operator. 
		
		Recall from equation~\eqref{eqt: decompose extended domain} and the subsequent discussion that $\overline{\D_\delta}$ can be written as
		$
		\overline{\D_\delta} = \overline{I_\delta} -M_{g}.
		$
		It follows immediately from the linearity of the duality pairing $\langle\cdot,\cdot\rangle$ that the adjoint of $\overline{\D_\delta}$ must be given by $\overline{I_\delta}^* - M_{g}^*$.
		So it is sufficient to find the adjoints of the integral operator $\overline{I_\delta}$ and the multiplication operator $M_{g}$.
		
		The adjoint of the multiplication operator is itself:
		\[
		\langle u, M_{g} v\rangle = \int_{\Omega_\delta} u(x) M_{g} v(x) \dif x
		= \int_{\Omega_\delta} u(x) g(x) v(x) \dif x
		=\langle M_{g} u, v\rangle.
		\]
		These integrals make sense because $g\in L^\infty$ and $u$ and $v$ live in H\"older conjugate spaces.
		
		It follows from Example VI.1.6 in~\cite{conway_course_2019} that the adjoint of $\overline{I_\delta}$ is another integral operator, with kernel $K^*(x,y)\defn K(y,x)$. (See equation~\eqref{eqt: kernel of int op} for a definition of $K$.) Hence,
		\[
		\overline{I_\delta}^* u(x) = \int_{\Omega_\delta} u(y)\mu_\delta(x-y)\chi_{B_\delta(x)}(y)\chi_{\Omega}(y) \dif y,
		\]
		which exactly matches the expression given in the statement of the theorem. Therefore, we have shown the adjoint of $\overline{\D_\delta}$ is given by $
		\overline{\D_\delta}^* = \overline{I_\delta}^*- M_{g},
		$
		as desired.
	\end{proof}
	
	\begin{remark}\label{rmk: complex}
		In some applications, it is more convenient to work in a Hilbert space over $\C$ rather than $\R$. For this, we modify Definition~\ref{def: pairing} to include a conjugate in the second argument to match the inner product on the complex Hilbert space $L^2$: $
		\langle u, v\rangle = \int_{\Omega_\delta} u(x) v(x)^*\dif x.
		$
		An analogous result to Theorem~\ref{thm: adjoint} then holds for complex-valued functions. The adjoint of $M_{g}$ is now $M_{{g}^*}$, multiplication by the conjugate of $g$, and the adjoint of $\overline{I_\delta}$ is
		\[
		\overline{I_\delta}^* u(x) = \int_{\Omega_\delta} u(y){\mu_\delta(x-y)}^*\chi_{B_\delta(x)}(y)\chi_{\Omega}(y) \dif y. 
		\]
		Hence, the same formula for $\overline{\D_\delta}^*$ applies in the complex case by simply placing a complex conjugate over the function $\mu_\delta$ both times it appears in equation~\eqref{eqt: adjoint defn}. If $\mu_\delta$ is real-valued (even if $u$ and $v$ are complex-valued), then the form of the adjoint is identical to the result just proven.
	\end{remark}
	
	\begin{remark}
		Theorem~\ref{thm: adjoint} only assumes the kernel is $L^1$ with support in $\overline{B_\delta(0)}$. When $\mu_\delta$ is antisymmetric and $\overline{\D_\delta}$ is restricted to $L^2_0$, it follows immediately from the theorem that $\overline{\D_\delta}$ is skew-adjoint. Applying this componentwise, one quickly recovers the familiar relationship between the gradient and divergence. In Proposition \ref{prop: div grad adj}, we provide an alternative proof based on Theorem \ref{thm: by parts} with no symmetry assumptions. Thus, the result holds for nonlocal operators other than first-order nonlocal derivatives. For example, for a symmetric, as in the nonlocal-Laplacian, it immediately follows from Theorem~\ref{thm: adjoint} that the operator is self-adjoint when acting on $L^2_0$.
	\end{remark}
	
	Now we present the nonlocal analog of integration by parts. As in the classical case, the formula provides a connection between integrals over the domain and the boundary. In the nonlocal setting, however, the boundary term is replaced with a collar integral that depends on the values of the function in a neighborhood of the topological boundary of $\Omega$. Analogous, to zero-trace functions, the collar integrals vanish for $u\in L^p_0$ (see Corollary~\ref{coro: by parts no boundary term}).
	
	\begin{thm}\label{thm: by parts}
		Fix $p\in[1,\infty)$ and $\mu_\delta\in L^1(\R^d)$ with support in $\overline{B_\delta(0)}$. Let $\mu_{\text{s}}$ and $\mu_{\text{a}}$ denote the symmetric and antisymmetric parts of $\mu_\delta$. We denote the corresponding extended nonlocal operators as
		\[
		\Ds u(x)\defn \int_{\R^d} [u(y) - u(x)] \mu_{\text{s}}(y-x)\chi_\Omega(x)\dif y \ \ \ \text{ and } \ \ \ \Da u(x)\defn \int_{\R^d} [u(y) - u(x)] \mu_{\text{a}}(y-x)\chi_\Omega(x)\dif y.
		\]
		Note that we have extended the domain of integration to $\R^d$, but the support of $\mu_\delta$ means only points in $\Omega_\delta$ matter. Also, $\Ds+\Da= \overline{\D_\delta}$, the nonlocal operator with kernel $\mu_\delta$.
		
		Then for any $u\in L^p(\Omega_\delta)$ and $v\in L^{p'}(\Omega_\delta)$, we have
		\begin{align*}
			\langle \Ds u, v \rangle - \langle u, \Ds v\rangle &= \int_{\Gamma_{-\delta}}\int_{\Gamma_\delta}  [u(y)v(x)-u(x)v(y)]\mu_{\text{s}}(y-x) \dif y\dif x, \text{ and}\\
			\langle \Da u, v \rangle + \langle u, \Da v\rangle &= \int_{\Gamma_{-\delta}}\int_{\Gamma_\delta} [u(y)v(x) +u(x)v(y)]\mu_{\text{a}}(y-x)\dif y\dif x.
		\end{align*}
		Combining these together, we obtain
		\[
		\langle \overline{\D_\delta} u, v \rangle+\langle u, \overline{\D_\delta} v\rangle =\int_{\Gamma_{-\delta}}\int_{\Gamma_\delta} [u(y)v(x) +u(x)v(y)]\mu_\delta(y-x) -2u(x)v(y)\mu_{\text{s}}(y-x)\dif y\dif x +2\langle u, \Ds v\rangle. 
		\]
	\end{thm}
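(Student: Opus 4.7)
The plan is to expand each duality pairing, rearrange, and exploit the (anti)symmetries of $\mu_{\text{s}}$ and $\mu_{\text{a}}$ against the (anti)symmetries of carefully chosen integrands. I will handle the symmetric and antisymmetric statements in parallel and then assemble the combined identity by bookkeeping.

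First I would write out both sides of the symmetric identity using the definitions. Since $\Ds v(x)$ carries a factor $\chi_\Omega(x)$, the outer integrals reduce to $x\in\Omega$, and the support of $\mu_{\text{s}}$ restricts the inner $y$-integral to $B_\delta(x)\subseteq\Omega_\delta$. A direct expansion and cancellation of the diagonal terms $u(x)v(x)\mu_{\text{s}}(y-x)$ gives
\begin{equation*}
\langle \Ds u,v\rangle-\langle u,\Ds v\rangle
=\int_{\Omega}\int_{\R^d}[u(y)v(x)-u(x)v(y)]\mu_{\text{s}}(y-x)\dif y\dif x.
\end{equation*}
The analogous expansion for $\Da$ produces $u(y)v(x)+u(x)v(y)-2u(x)v(x)$ times $\mu_{\text{a}}(y-x)$; since $\mu_{\text{a}}$ is antisymmetric its integral over $\R^d$ vanishes, so the diagonal term integrates to zero and leaves
\begin{equation*}
\langle \Da u,v\rangle+\langle u,\Da v\rangle
=\int_{\Omega}\int_{\R^d}[u(y)v(x)+u(x)v(y)]\mu_{\text{a}}(y-x)\dif y\dif x.
\end{equation*}

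Next I would restrict the inner domain to $\Omega_\delta$ (using compact support of $\mu_\delta$) and split it as $\Omega\cup\Gamma_\delta$. On $\Omega\times\Omega$ the two contributions vanish by a symmetry/Fubini argument: in the $\Ds$ identity the integrand $[u(y)v(x)-u(x)v(y)]\mu_{\text{s}}(y-x)$ is antisymmetric under $x\leftrightarrow y$ while the domain $\{(x,y)\in\Omega\times\Omega:|y-x|\le\delta\}$ is symmetric; in the $\Da$ identity the integrand is symmetric while the kernel factor is antisymmetric. Either way, renaming variables and invoking Fubini shows the integral equals its own negative.

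Only the $\Omega\times\Gamma_\delta$ piece survives. For any $(x,y)$ with $x\in\Omega$, $y\in\Gamma_\delta$, and $|y-x|\le\delta$, the segment from $x$ to $y$ crosses $\partial\Omega$, so $d(x,\partial\Omega)\le\delta$, i.e.\ $x\in\Gamma_{-\delta}$. Since the kernel support also gives $y\in B_\delta(x)$ automatically, the integral collapses to the stated product $\Gamma_{-\delta}\times\Gamma_\delta$. This yields the first two displayed identities. Finally, the combined formula follows by adding the two identities in the form $\langle \Ds u,v\rangle+\langle u,\Ds v\rangle=2\langle u,\Ds v\rangle+(\langle \Ds u,v\rangle-\langle u,\Ds v\rangle)$, together with the antisymmetric identity, and simplifying using $\mu_\delta=\mu_{\text{s}}+\mu_{\text{a}}$ so that $u(y)v(x)\mu_{\text{s}}+u(y)v(x)\mu_{\text{a}}=u(y)v(x)\mu_\delta$ and the $u(x)v(y)$ terms combine as $(\mu_{\text{a}}-\mu_{\text{s}})=\mu_\delta-2\mu_{\text{s}}$.

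The main obstacle I anticipate is the careful bookkeeping in the symmetry-cancellation step on $\Omega\times\Omega$: one has to argue simultaneously about the parity of the integrand under $x\leftrightarrow y$ and the symmetry of the restricted integration domain, and verify that the support conditions allow extending the inner domain to $\R^d$ on both sides. Everything else is routine integration by Fubini's theorem, which applies throughout since $\mu_\delta\in L^1(\R^d)$, $u\in L^p(\Omega_\delta)$, and $v\in L^{p'}(\Omega_\delta)$ ensure all iterated integrals are absolutely convergent.
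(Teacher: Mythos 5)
Your proof is correct and takes essentially the same route as the paper: expand the pairings, use Fubini together with the (anti)symmetry of the kernel to cancel the $\Omega\times\Omega$ contribution, and then observe that points $x\in\Omega$, $y\in\Gamma_\delta$ within distance $\delta$ of each other force $x\in\Gamma_{-\delta}$. A minor improvement over the paper's proof is that you justify Fubini directly via absolute convergence (Young/H\"older) rather than first passing through a density argument with smooth compactly supported functions.
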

	
	\begin{proof}
		The argument in the two cases is nearly identical, so we only present details for the symmetric case.
		Using a density argument, it is enough to prove the statement for smooth functions with compact support. Hence, we may freely apply the Fubini-Tonelli Theorem throughout. Expanding the definitions, switching the order of integration, relabeling variables, and using the symmetry of $\mu_{\text{s}}$ yields the following
		\begin{align*}
			\langle \Ds u,v\rangle &= \int_{\Omega_\delta} \int_{\R^d} [u(y)-u(x)]\mu_{\text{s}}(y-x) \chi_\Omega(x) v(x) \dif y\dif x \\
			& = \int_{\Omega_\delta} \int_{\R^d} u(y)\mu_{\text{s}}(y-x) \chi_\Omega(x) v(x) \dif y\dif x - \int_{\Omega_\delta} \int_{\R^d} u(x)\mu_{\text{s}}(y-x) \chi_\Omega(x) v(x) \dif y\dif x \\
			&= \int_{\Omega_\delta} \int_{\R^d} u(x)\mu_{\text{s}}(y-x) \chi_\Omega(y) v(y) \dif y\dif x - \int_{\Omega_\delta} \int_{\R^d} u(x)\mu_{\text{s}}(y-x) \chi_\Omega(x) v(x) \dif y\dif x.
		\end{align*}
		We note the change in the domain of integration on the first term is valid since the integrand is zero whenever $x\notin\Omega_\delta$ and $y\in\Omega$. Next, we subtract and add $v(y)\chi_\Omega(x)$ to obtain
		\begin{align*}
			\langle \Ds u,v\rangle &= \int_{\Omega_\delta} \int_{\R^d} u(x)v(y)[\chi_\Omega(y)-\chi_\Omega(x)]\mu_{\text{s}}(y-x) \dif y\dif x\\
			&\qqqquad+\int_{\Omega_\delta} \int_{\R^d} u(x)[v(y)-v(x)]\mu_{\text{s}}(y-x) \chi_\Omega(x) \dif y\dif x\\
			&= \int_{\Gamma_{-\delta}}\int_{\Gamma_\delta}  [u(y)v(x)-u(x)v(y)]\mu_{\text{s}}(y-x) \dif y\dif x + \langle u, \Ds v\rangle.
		\end{align*}
		In this last step, we used the pointwise values of the map $(x,y)\mapsto [\chi_\Omega(y)-\chi_\Omega(x)]$, as well as the same manipulation as before: switch the order of integration, relabel, and exploit the symmetry of $\mu_{\text{s}}$. The corresponding formula for the antisymmetric part is similarly proved.
	\end{proof}
	
	\begin{remark}\label{rmk: greens}
		When $\mu_\delta$ is antisymmetric, the conclusion of the previous theorem looks more like the familiar integration by parts. The double integral over the double collar plays the role of a boundary term. Alternatively, when $\mu_\delta$ is purely symmetric, we obtain something analogous to Green's Second Identity. Recall that symmetric kernels are often thought of as second-order operators, like the Laplacian. Comparing the nonlocal and local versions of Green's Identity, we see that the terms in parentheses correspond to a normal derivative:
		\begin{align*}
			\int_{\Omega_\delta} \Ds u(x) v(x)  - u(x) \Ds v(x) \dif x &= \int_{\Gamma_{-\delta}}  v(x)\left(\int_{\Gamma_\delta}u(y)\mu_{\text{s}}(y-x) \dif y\right)-u(x)\left(\int_{\Gamma_\delta}v(y)\mu_{\text{s}}(y-x) \dif y\right)\dif x\\
			\int_{\Omega} \triangle  u(x) v(x)  - u(x) \triangle v(x) \dif x &= \int_{\partial \Omega} v(x) \dpd{u}{\nu} - u(x) \dpd{v}{\nu} \dif x 
		\end{align*}
		Finding a nonlocal Green's Identity is of course not a new idea; see for example \cite{gunzburger_nonlocal_2010}.
		
		Further insight can be gained by linearizing $u$ and $v$ about a point in the inner collar, $x\in\Gamma_{-\delta}$. This yields
		\[
		[u(y)v(x) - u(x)v(y)]\mu_{\text{s}}(y-x)\approx v(x) \left(\nabla u\bdot(y-x)\mu_{\text{s}}(y-x)\right) - u(x) \left(\nabla v(x)\bdot(y-x)\mu_{\text{s}}(y-x)\right).
		\]
		After integrating, we get a symmetric weighted average. Since $x$ is in $\Gamma_{-\delta}\subseteq\Omega$ and $y$ ranges over $\Gamma_\delta$, the vector $(y-x)$ always points outward from $\Omega$. Taking the symmetric weighted average of all such vectors should produce something like an outward pointing normal (for a flat boundary, you recover exactly the outward normal). And so, we expect that for smooth boundaries and smooth functions $u, v$,
		\[
		\int_{\Gamma_{-\delta}}  v(x)\left(\int_{\Gamma_\delta}u(y)\mu_{\text{s}}(y-x) \dif y\right)-u(x)\left(\int_{\Gamma_\delta}v(y)\mu_{\text{s}}(y-x) \dif y\right)\dif x \approx\int_{\partial \Omega} v(x) \dpd{u}{\nu} - u(x) \dpd{v}{\nu} \dif x.
		\]
	\end{remark}
	
	The double integral in Theorem \ref{thm: by parts} depends only on the values of both $u$ and $v$ in the double collar $\Gamma_\delta \cup \Gamma_{-\delta}$. This implies the following Corollary.
	\begin{coro}\label{coro: by parts no boundary term}
		Suppose that $\delta>0$, $p$ and $p'$ are give in $(1,\infty)$ with $\frac1p+\frac1{p'}=1$, and either of the following conditions hold:
		\begin{enumerate}
			\item $u\in L^p_0$ and $v\in L^{p'}_0$, or
			\item $u\in L^p\left(\Omega_\delta\right)$ and $v\in L^{p'}_0$ and $v=0$ for a.e. $x$ in $\Gamma_{-\delta}$. 
		\end{enumerate}
		If $\mu_\delta$ is antisymmetric, then $\langle \overline{\D_\delta} u, v\rangle = -\langle u, \overline{\D_\delta}  v\rangle$. If $\mu_\delta$ is instead symmetric, then $\langle \overline{\D_\delta} u, v\rangle = \langle u, \overline{\D_\delta}  v\rangle$.
	\end{coro}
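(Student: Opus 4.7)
The plan is to derive this corollary as a direct consequence of Theorem~\ref{thm: by parts} by showing that the double-collar boundary integrals vanish under each set of hypotheses. The key observation is that in the integrals appearing on the right-hand side of the by-parts identity, the outer variable $x$ ranges over $\Gamma_{-\delta}$ while the inner variable $y$ ranges over $\Gamma_\delta$, so it suffices to analyze which of $u$ and $v$ vanish on each of these two sets.

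First I would split into the two cases of the hypothesis. In case (1), both $u$ and $v$ lie in their respective $L^p_0$ spaces, so $u(y)=v(y)=0$ for almost every $y\in\Gamma_\delta$. Consequently, every appearance of $u(y)$ and $v(y)$ in the integrands from Theorem~\ref{thm: by parts} vanishes, and the integrals $\int_{\Gamma_{-\delta}}\int_{\Gamma_\delta}[u(y)v(x)\pm u(x)v(y)]\mu_{\text{s/a}}(y-x)\dif y\dif x$ are identically zero. In case (2), we have $v\in L^{p'}_0$ and additionally $v=0$ a.e.~on $\Gamma_{-\delta}$, so $v(y)=0$ for a.e.~$y\in\Gamma_\delta$ and $v(x)=0$ for a.e.~$x\in\Gamma_{-\delta}$. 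Both $u(y)v(x)$ and $u(x)v(y)$ are therefore zero a.e.~on the region of integration, again killing the boundary term.

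With the double-collar terms eliminated, I would conclude by specializing the general identity from Theorem~\ref{thm: by parts}. When $\mu_\delta$ is antisymmetric, $\mu_{\text{s}}\equiv 0$ and $\overline{\D_\delta}=\D_{\text{a}}$, so the second identity in Theorem~\ref{thm: by parts} reduces to $\langle\overline{\D_\delta}u,v\rangle+\langle u,\overline{\D_\delta}v\rangle=0$, which is the claim. When $\mu_\delta$ is symmetric, $\mu_{\text{a}}\equiv 0$ and $\overline{\D_\delta}=\D_{\text{s}}$, so the first identity yields $\langle\overline{\D_\delta}u,v\rangle-\langle u,\overline{\D_\delta}v\rangle=0$, as required.

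Essentially no obstacle arises here: the work has already been done in Theorem~\ref{thm: by parts}, and the corollary amounts to a careful bookkeeping argument about which variables lie in which set. The only mild subtlety is confirming that ``$v\in L^{p'}_0$ and $v=0$ on $\Gamma_{-\delta}$'' genuinely forces the product $u(x)v(y)$ to vanish on $\Gamma_{-\delta}\times\Gamma_\delta$ (since $v$ vanishes on both slots of interest), which is immediate from the definitions of $\Gamma_\delta$ and $\Gamma_{-\delta}$ in Definitions~\ref{def: collar} and~\ref{def: inner collar}.
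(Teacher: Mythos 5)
Your argument is correct and is essentially the same reasoning the paper uses, namely that Theorem~\ref{thm: by parts} localizes the defect of (anti)symmetry to the double collar $\Gamma_{-\delta}\times\Gamma_\delta$, and the hypotheses in either case force both $u(y)v(x)$ and $u(x)v(y)$ to vanish a.e.\ there. The specialization to $\mu_{\text{s}}\equiv 0$ (respectively $\mu_{\text{a}}\equiv 0$) then immediately yields the skew-adjoint (respectively self-adjoint) relation.
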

	
	We also highlight a special case of the corollary above, relevant to applications of spectral theory to nonlocal operators. An analogous statement holds for complex-valued functions.
	
	\begin{coro}\label{coro: adj on Hilbert space}
		Assume that $\mu_\delta$ is $\R$-valued, but view the nonlocal derivative as acting on the complex Hilbert space $L^2_0$. When $\mu_\delta$ is antisymmetric $\mu_\delta$, $\overline{\D_\delta}$ is skew-Hermitian with a purely imaginary spectrum. For symmetric $\mu_\delta$, $\overline{\D_\delta}$ is self-adjoint with a purely real spectrum.
	\end{coro}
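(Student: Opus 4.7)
The plan is to deduce this corollary directly from Corollary \ref{coro: by parts no boundary term} (case 1), combined with the classical spectral theory of bounded self-adjoint and skew-Hermitian operators on complex Hilbert spaces. The only nontrivial bookkeeping concerns the passage from the real bilinear pairing used in the integration-by-parts result to the Hermitian inner product $\langle u,v\rangle \defn \int_{\Omega_\delta} u(x)\overline{v(x)}\dif x$ on complex $L^2_0$.

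First I would note that since $\mu_\delta$ is $\R$-valued, the operator $\overline{\D_\delta}$ commutes with complex conjugation: for any $u\in L^2_0(\Omega_\delta;\C)$, one has $\overline{\overline{\D_\delta} u} = \overline{\D_\delta}\bar u$, as is immediate from the integral expression \eqref{eqt: extended domain}. This is essentially the content of Remark \ref{rmk: complex}. I would also observe that $L^2_0$ is stable under conjugation, so $\bar v \in L^2_0$ whenever $v \in L^2_0$.

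Next I would apply Corollary \ref{coro: by parts no boundary term} to the pair $u, \bar v \in L^2_0$, viewing them as complex-valued by $\R$-linear extension of the real bilinear pairing $(f,g)_{\R} \defn \int fg$. In the antisymmetric case the corollary gives $(\overline{\D_\delta} u, \bar v)_{\R} = -(u, \overline{\D_\delta} \bar v)_{\R}$. Combined with commutation with conjugation, this rewrites as
\[
\langle \overline{\D_\delta} u, v\rangle
= \int \overline{\D_\delta} u(x)\,\overline{v(x)}\dif x
= -\int u(x)\,\overline{\overline{\D_\delta} v(x)}\dif x
= -\langle u, \overline{\D_\delta} v\rangle,
\]
so $\overline{\D_\delta}^* = -\overline{\D_\delta}$, i.e., $\overline{\D_\delta}$ is skew-Hermitian on $L^2_0$. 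The symmetric case is word-for-word identical but with a plus sign in place of the minus, yielding $\overline{\D_\delta}^* = \overline{\D_\delta}$.

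For the spectral statements I would invoke standard facts: a bounded self-adjoint operator on a complex Hilbert space has spectrum contained in $\R$, and a bounded skew-Hermitian operator $T$ can be written $T = iS$ with $S = -iT$ bounded self-adjoint, so $\sigma(T) = i\sigma(S) \subseteq i\R$. The only real subtlety — hardly an obstacle once flagged — is the interplay between the real bilinear form in Corollary \ref{coro: by parts no boundary term} and the Hermitian inner product; this is exactly where the hypothesis that $\mu_\delta$ be $\R$-valued enters, via the conjugation-commutation identity above.
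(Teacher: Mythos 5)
Your proof is correct and takes the same route the paper intends: deduce the skew-Hermitian/self-adjoint identities from Corollary~\ref{coro: by parts no boundary term} with $p=p'=2$, bridging the real bilinear pairing and the Hermitian inner product via the fact that a real kernel makes $\overline{\D_\delta}$ commute with complex conjugation (the content of Remark~\ref{rmk: complex}), and then read off the spectral conclusions from standard spectral theory. The paper states this corollary without proof, so your write-up simply supplies the bookkeeping the paper leaves implicit; the one small imprecision is calling the extension of the bilinear pairing to complex-valued functions an ``$\R$-linear extension'' when it is really the $\C$-bilinear extension, but your computation uses it correctly.
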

	
	Next we apply these results to the nonlocal gradient and divergence, defined in Definition~\ref{def: div and grad}. The conclusion is analogous to the classical case. As with the results described above, the case where $\mu_\delta$ is antisymmetric is already understood. See for example Theorem 3.4 of \cite{du_nonlocal_2019}.
	
	\begin{Prop}\label{prop: div grad adj}
		Fix $p\in[1,\infty)$ and assume the hypotheses given in Section~\ref{sect: vectors} and Definition~\ref{def: div and grad}. 
		\begin{enumerate}
			\item If $X = L^p(\Omega_\delta)$, then $
			\langle \G_\delta u, \vec v \rangle_{X^d} = \langle u, -\mathscr D_\delta \vec v\rangle_X  + \mathsf{A}$, where
			\[\mathsf{A} = \int_{\Gamma_{-\delta}}\int_{\Gamma_\delta} [u(y)\vec v(x) +u(x)\vec v(y)]\bdot \vec\mu_{\delta}(y-x) -2u(x)\vec v(y)\bdot \vec \mu_{\text{s}}(y-x)\dif y\dif x+ \sum_{k=1}^d\ 2\langle u, \overline{\D}_{\text{s},k} v_k\rangle_X.\]
			\item Suppose that $\vec{\mu}_{\delta}$ has antisymmetric components, $u\in L^p_0(\dom_\delta)$, and $\vec v = (v_1,\dots,v_d)\in L^p_0(\dom_\delta)^d$. Then, $\langle \G_\delta u, \vec v \rangle_{X^d} = \langle u, -\mathscr D_\delta \vec v\rangle_X$.
			\item Suppose that $\vec{\mu}_{\delta}$ has antisymmetric components. Then we obtain a nonlocal analog of the Divergence Theorem:
			\begin{align*}
				\int_{\Omega_\delta} \mathscr D_\delta \vec v \dif x  = \int_{\Gamma_{-\delta}}\int_{\Gamma_\delta} [\vec v(x) + \vec v(y)]\bdot \vec \mu_{\delta}(y-x) \dif y\dif x
			\end{align*}
		\end{enumerate}
	\end{Prop}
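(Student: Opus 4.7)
All three parts should fall out of a componentwise application of the combined integration-by-parts identity at the end of Theorem~\ref{thm: by parts}. Starting from Definition~\ref{def: nonlocal differential operators}, bilinearity of the duality pairing gives
\[
\langle \G_\delta u, \vec v\rangle_{X^d}
=\sum_{k=1}^d c_k\langle \overline{\D_{\delta,k}} u, v_k\rangle_X
\qquad\text{and}\qquad
\langle u, \mathscr D_\delta \vec v\rangle_X
=\sum_{k=1}^d c_k\langle u, \overline{\D_{\delta,k}} v_k\rangle_X,
\]
so the whole argument amounts to applying Theorem~\ref{thm: by parts} to each scalar pair $(\overline{\D_{\delta,k}}, v_k)$ and summing.

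For part (1), I would let $\mu_{\text{s},k}$ and $\mu_{\text{a},k}$ denote the symmetric/antisymmetric parts of $\mu_{\delta,k}$ and write $\overline{\D}_{\text{s},k}$ for the corresponding extended symmetric operator. Applying the combined identity from Theorem~\ref{thm: by parts} to each index $k$ yields
\[
\langle \overline{\D_{\delta,k}} u, v_k\rangle+\langle u, \overline{\D_{\delta,k}} v_k\rangle
=\int_{\Gamma_{-\delta}}\!\!\int_{\Gamma_\delta}[u(y)v_k(x)+u(x)v_k(y)]\mu_{\delta,k}(y-x)-2u(x)v_k(y)\mu_{\text{s},k}(y-x)\,\dif y\,\dif x
+2\langle u,\overline{\D}_{\text{s},k}v_k\rangle.
\]
Multiplying by $c_k$, summing over $k$, and recognizing the resulting $k$-sums as the Euclidean dot products with $\vec\mu_\delta$ and $\vec\mu_{\text{s}}$ (with the $c_k$'s absorbed into the vector components, matching the gradient/divergence normalization $c_k=1$ used in the statement), I can rearrange to isolate $\langle \G_\delta u, \vec v\rangle = -\langle u, \mathscr D_\delta \vec v\rangle + \mathsf{A}$, which is exactly (1).

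Part (2) is then immediate: antisymmetry of each $\mu_{\delta,k}$ forces $\mu_{\text{s},k}=0$, so $\vec\mu_{\text{s}}=0$ and $\overline{\D}_{\text{s},k}\equiv 0$, collapsing $\mathsf{A}$ to the single $\vec\mu_\delta$ double integral. Because $u\in L^p_0(\Omega_\delta)$ and each $v_k\in L^{p'}_0(\Omega_\delta)$ vanish on $\Gamma_\delta$, and the inner variable $y$ ranges over $\Gamma_\delta$, both $u(y)$ and $v_k(y)$ vanish pointwise in the remaining integrand, so $\mathsf{A}=0$. For part (3), I would specialize part (1) to the constant $u\equiv 1\in L^{p'}(\Omega_\delta)$. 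Each integrand $[1-1]\mu_{\delta,k}(y-x)$ is identically zero, so $\overline{\D_{\delta,k}}1=0$, hence $\G_\delta 1 = 0$ and the left-hand side vanishes; also $\langle 1,-\mathscr D_\delta \vec v\rangle = -\int_{\Omega_\delta}\mathscr D_\delta \vec v\,\dif x$. Antisymmetry again kills the $\vec\mu_{\text{s}}$ and $\overline{\D}_{\text{s},k}$ pieces of $\mathsf{A}$, and substituting $u\equiv 1$ into what remains produces precisely the boundary expression on the right-hand side of (3). Rearranging gives the nonlocal divergence identity.

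The main obstacle is purely notational: carefully tracking the vector components and the constants $c_k$ so that the sum of scalar $k$-indexed boundary integrals condenses correctly into the dot-product form stated in $\mathsf{A}$. No substantive analytic step is needed beyond Theorem~\ref{thm: by parts} and the trivial observation that the extended nonlocal derivatives annihilate constants.
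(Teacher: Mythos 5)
Your proof is correct and follows essentially the same route as the paper's own proof: apply Theorem~\ref{thm: by parts} componentwise to each pair $(\overline{\D_{\delta,k}}u,v_k)$, sum over $k$, and identify the resulting sums as dot products for part (1); then for (2) observe that antisymmetry kills $\vec\mu_{\text{s}}$ and $\overline{\D}_{\text{s},k}$ while membership in the zero-collar spaces kills the remaining boundary integral; and for (3) substitute $u\equiv 1$ and use $\overline{\D_{\delta,k}}1=0$. Your added observation that the duality pairing forces $\vec v\in (L^{p'}_0)^d$ (rather than the $(L^p_0)^d$ the statement writes) is a reasonable correction of an apparent typo in the paper.
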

	
	\begin{proof}
		For (1), applying the nonlocal integration by parts to each component of $\vec v$ allows us to write
		\begin{align*}
			\langle \G_\delta u, \vec v \rangle_{X^d} &= \sum_{k=1}^d \langle \overline{\D_{\delta,k}}u, v_k\rangle_X = \sum_{k=1}^d\lp\langle u,-\overline{\D_{\delta,k}} v_k\rangle_X   +2\langle u, \overline{\D}_{\text{s},k} v_k\rangle_X+ \mathsf{A}_k\rp \\
			&= \langle u , -\mathscr D_{\delta} \vec v\rangle_X + \sum_{k=1}^d\lp 2\langle u, \overline{\D}_{\text{s},k} v_k\rangle_X+ \mathsf{CI}_k\rp,
		\end{align*}
		where $\mathsf{CI}_k$ is the collar integral from Theorem \ref{thm: by parts}: 
		\[
		\mathsf{CI}_k = \int_{\Gamma_{-\delta}}\int_{\Gamma_\delta} [u(y)v_k(x) +u(x)v_k(y)]\mu_{\delta,k}(y-x) -2u(x)v_k(y)\mu_{s,k}(y-x)\dif y\dif x.
		\]
		Summing over $k$ yields the claim.
		
		In (2), the collar integral vanishes, as each function $u$ and $v_k$ are identically zero on $\Gamma_\delta$. Since $\vec{\mu}_{\delta}$ has antisymmetric components, the last summation in $\mathsf{A}$ is zero. Finally, assuming $\vec{\mu}_\delta$ is antisymmetric, (3) follows by applying Part 1 with $u(x) = 1$.
	\end{proof}

	\section{Compactness of the Nonlocal Derivative}\label{sec: compact}

	The nonlocal derivative with integrable kernel acts as a bounded linear operator on $L^p$ into $L^p$. In fact, in many cases of practical interest, $\D_\delta$ turns out to be a compact operator. This is a very strong continuity condition, which implies a host of useful analytic properties. In a some sense, compact operators are ``almost finite-dimensional" and many familiar tools from finite-dimensional linear algebra can be applied. In particular, when $\mu_\delta$ is antisymmetric and $\overline{\D_\delta}$ acts on $L^2_0$, we can leverage the powerful theory of compact, normal operators on a Hilbert space.
	
	Compactness will play a central role in our existence of minimizers result, given in Part II. It also has an important connection to the existence of a Poincar\'e-type inequality, which is explored in Section~\ref{sect: poincare}. We now characterize the compactness of $\D_\delta$ in terms of the choice of kernel $\mu_\delta$. Similar results are given in Lemma 4.5 and Theorem 4.6 of~\cite{mengesha_analysis_2013}, and a related idea is discussed in Theorem 3.4 of~\cite{mengesha_bond-based_2014}. More general results on the compactness of integral operators can be found in Theorem 5.2 of~\cite{precup_methods_2002}.
	
	\begin{thm}\label{thm: cpt}
		For any $p\in[1,\infty)$, the operator $\overline{\D_\delta}: L^p\left(\Omega_\delta\right)\to L^p\left(\Omega_\delta\right)$ is compact if and only $\mu_\delta$ integrates to zero.
	\end{thm}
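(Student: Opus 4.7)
The plan is to exploit the decomposition $\overline{\D_\delta} = \overline{I_\delta} - \overline{\mu_\delta}\, EP$ from equation~\eqref{eqt: decompose extended domain}, where $\overline{\mu_\delta}=\int_{B_\delta(0)}\mu_\delta(z)\dif z$. Everything hinges on one central technical lemma: the integral operator $\overline{I_\delta}$ is compact on $L^p(\Omega_\delta)$ for \emph{every} $\mu_\delta\in L^1(\R^d)$, independent of any mean-free assumption. Once this is in hand, both directions of the equivalence follow from simple operator algebra and the non-compactness of the identity on infinite-dimensional spaces.

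To prove compactness of $\overline{I_\delta}$, I would argue by density. Choose a sequence $\mu_\delta^n \in C_c^\infty(\R^d)$ with $\mu_\delta^n \to \mu_\delta$ in $L^1(\R^d)$, and let $\overline{I_\delta^n}$ denote the corresponding operators. Since $\mathrm{supp}(\mu_\delta^n)$ is compact, $\overline{I_\delta^n} u(x) = \chi_\Omega(x)\,(u*\tilde\mu_\delta^n)(x)$ for $u\in L^p(\Omega_\delta)$, with $\tilde\mu_\delta^n(z)=\mu_\delta^n(-z)$. Standard convolution estimates give uniform bounds on $\|u*\tilde\mu_\delta^n\|_{L^\infty(\Omega)}$ and on the modulus of continuity of $u*\tilde\mu_\delta^n$ on $\overline\Omega$ (via $\|\tilde\mu_\delta^n\|_{L^{p'}}$ and $\|\nabla\tilde\mu_\delta^n\|_{L^{p'}}$, both finite) that are uniform over the unit ball of $L^p(\Omega_\delta)$. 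Arzelà--Ascoli combined with the continuous embedding $C(\overline\Omega)\hookrightarrow L^p(\Omega)$ and extension-by-zero to $L^p(\Omega_\delta)$ gives compactness of each $\overline{I_\delta^n}$. The bound $\|\overline{I_\delta}-\overline{I_\delta^n}\|_{\text{op}} \le \|\mu_\delta-\mu_\delta^n\|_{L^1(\R^d)}$ (a minor variant of Proposition~\ref{prop: D is bdd} without the $2$) shows $\overline{I_\delta^n}\to\overline{I_\delta}$ in operator norm, and the set of compact operators is closed in operator norm, so $\overline{I_\delta}$ is compact.

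With the lemma in hand, the forward direction is immediate: if $\overline{\mu_\delta}=0$, then $\overline{\D_\delta}=\overline{I_\delta}$, which is compact. For the converse, suppose $\overline{\D_\delta}$ is compact. Then $\overline{\mu_\delta}\,EP = \overline{I_\delta}-\overline{\D_\delta}$ is compact as a difference of compact operators. If $\overline{\mu_\delta}\ne 0$, dividing shows that $EP$ itself is compact. However, by the construction in Definition~\ref{rmk: isometric isomorphism}, $EP$ restricted to the closed, infinite-dimensional subspace $L^p_0\subseteq L^p(\Omega_\delta)$ acts as the identity, and the identity on an infinite-dimensional Banach space is never compact---contradiction. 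Hence $\overline{\mu_\delta}=0$. The main obstacle throughout is the compactness lemma for $\overline{I_\delta}$; the reverse-direction contradiction is purely algebraic once one knows $\overline{I_\delta}$ is compact. A subtle point within the lemma is the presence of the indicator $\chi_\Omega$ in the kernel, which destroys continuity across $\partial\Omega$; the workaround is to establish equicontinuity only on $\overline\Omega$ and then extend by zero, rather than treating $\overline{\Omega_\delta}$ as a single compact space.
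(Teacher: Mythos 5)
Your proof is correct and follows the paper's overall strategy---decompose $\overline{\D_\delta}=\overline{I_\delta}-\overline{\mu_\delta}\,EP$ and exploit that $\overline{I_\delta}$ is always compact---but you handle both key ingredients differently. For the converse direction, the paper recognizes $\overline{\mu_\delta}\,EP$ as the multiplication operator $M_g$ with $g=\overline{\mu_\delta}\chi_\Omega$ and invokes a general theorem (citing Takagi) that the only compact multiplication operator on $L^p$ over a non-atomic measure space is zero. You instead argue directly: if $\overline{\mu_\delta}\ne 0$ then $EP$ would be compact, yet $EP$ restricted to the closed, infinite-dimensional subspace $L^p_0$ is the identity, and the identity is never compact on an infinite-dimensional Banach space. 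Your route is more self-contained (no appeal to an external multiplication-operator theorem, only the Riesz lemma), at the cost of being specific to this particular multiplication function rather than giving the general statement the paper points to. For compactness of $\overline{I_\delta}$, the paper cites Brezis's Corollary 4.28 (Fr\'echet--Kolmogorov--Riesz); you give a self-contained argument by approximating $\mu_\delta$ with smooth compactly supported kernels in $L^1$, obtaining compactness of each approximant via Arzel\`a--Ascoli, and passing to the limit using closedness of the compact operators in operator norm and the Young-inequality bound $\|\overline{I_\delta}-\overline{I_\delta^n}\|_{\mathrm{op}}\le\|\mu_\delta-\mu_\delta^n\|_{L^1}$. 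The paper itself notes in passing that this Arzel\`a--Ascoli route is available (citing Gripenberg--Londen--Staffans), so this is a standard alternative. Your observation that the factor $\chi_\Omega$ in the kernel of $\overline{I_\delta}$ destroys continuity across $\partial\Omega$, and that one should establish equicontinuity on $\overline\Omega$ and then extend by zero rather than working on $\overline{\Omega_\delta}$, is a genuine subtlety that you handle correctly.
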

	
	For some context, refer to Section~\ref{sect: symm}. Note that antisymmetric kernels are necessarily mean-free, and thus the corresponding nonlocal derivative will be compact. The above theorem follows almost immediately from the next lemma, which states that cross-correlations with $\mu_\delta$ are always compact operators when acting on bounded domains.
	
	\begin{lemma}\label{lem: cross cor are cpt}
		Recall that $\Omega_\delta\subseteq \R^d$ is bounded. Fix any function $\mu_\delta\in L^1(\R^d)$ with support contained in $\overline{B_\delta(0)}$. Then define a linear operator $S:L^p\left(\Omega_\delta\right)\to L^p\left(\Omega\right)$ by taking the cross-correlation with $\mu_\delta$:
		\[
		Su(x) = (u\star \mu_\delta)(x) = \int_{\Omega_\delta} u(y)\mu_\delta(y-x)\dif y.
		\]
		(To align with the familiar definition, one can extend all functions by zero to all of $\R^d$.) Then $S$ is a compact operator, for any $p$.
	\end{lemma}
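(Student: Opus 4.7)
The plan is to prove compactness by approximating $\mu_\delta$ in $L^1(\R^d)$ by continuous, compactly supported functions, showing that the approximating operators are compact via Arzelà–Ascoli, and then invoking the fact that the compact operators form a closed subset of the bounded operators in the operator-norm topology.

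\textbf{Boundedness.} First I would note that $S$ is a bounded linear operator on $L^p$. After extending $u$ by zero to all of $\R^d$, the integral defining $Su$ is a cross-correlation, and Young's convolution inequality gives
\[
\|Su\|_{L^p(\Omega)}\le\|\mu_\delta\|_{L^1(\R^d)}\|u\|_{L^p(\Omega_\delta)}.
\]
Then by density of $C_c(\R^d)$ in $L^1(\R^d)$, pick a sequence $\{\mu_n\}\subseteq C_c(\R^d)$ with $\mu_n\to\mu_\delta$ in $L^1$, and define the corresponding cross-correlation operators $S_n$. The same Young inequality applied to $S-S_n$ yields
\[
\|S-S_n\|_{\mathrm{op}}\le\|\mu_\delta-\mu_n\|_{L^1(\R^d)}\to 0,
\]
so $S_n\to S$ in the operator norm.

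\textbf{Compactness of $S_n$.} For each $n$, the kernel $\mu_n$ is bounded, uniformly continuous, and compactly supported. Fix a bounded set $B\subseteq L^p(\Omega_\delta)$ and let $q=p'$ be the H\"older conjugate of $p$ (allowing $q=\infty$ when $p=1$). For $u\in B$, H\"older's inequality gives $\|S_n u\|_{L^\infty(\Omega)}\le \|\mu_n\|_{L^q(\R^d)}\|u\|_{L^p(\Omega_\delta)}$, so $\{S_nu:u\in B\}$ is uniformly bounded. For equicontinuity,
\[
|S_nu(x)-S_nu(x')|\le\|u\|_{L^p(\Omega_\delta)}\|\mu_n(\cdot-x)-\mu_n(\cdot-x')\|_{L^q(\R^d)},
\]
and uniform continuity of $\mu_n$ together with its compact support forces the right-hand side to $0$ as $|x-x'|\to 0$, uniformly in $u\in B$. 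Arzel\`a–Ascoli then yields relative compactness of $\{S_nu:u\in B\}$ in $C(\overline{\Omega})$, and since $\Omega$ is bounded, $C(\overline{\Omega})$ embeds continuously into $L^p(\Omega)$, so the family is also relatively compact in $L^p(\Omega)$. Hence each $S_n$ is compact.

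\textbf{Conclusion.} Because the compact operators form a norm-closed subspace of $B(L^p(\Omega_\delta),L^p(\Omega))$, the norm-limit $S$ of the compact operators $S_n$ is itself compact. The main subtlety—really the only thing to watch—is the endpoint case $p=1$, where $q=\infty$; here one still has $\|\mu_n\|_{L^\infty}<\infty$ and $\mu_n$ uniformly continuous, so the H\"older step and the equicontinuity estimate go through without modification.
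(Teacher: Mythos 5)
Your proof is correct, and it is self-contained, which the paper's argument is not: the paper disposes of this lemma by pointing to Corollary~4.28 of Brezis, whose proof rests on the Fr\'echet--Kolmogorov--Riesz characterization of precompact sets in $L^p$. You instead regularize the kernel, approximating $\mu_\delta$ by $\mu_n \in C_c(\R^d)$ in the $L^1$ norm, show each $S_n$ is compact by an Arzel\`a--Ascoli argument on $C(\overline\Omega)$, and close via the norm-closedness of the compact operators. The paper actually acknowledges this second route exists (it cites Gripenberg--Londen--Staffans as giving ``a similar approach \dots using the Arzel\`a--Ascoli Theorem''), so you are not outside the spirit of the paper, but you have filled in the argument rather than outsourcing it. The two approaches buy slightly different things: the Fr\'echet--Kolmogorov--Riesz route works directly with the $L^1$ kernel and avoids any smoothing step, while yours trades that for the more elementary Arzel\`a--Ascoli theorem at the cost of an approximation-and-limit argument and the observation that $C(\overline\Omega)$ embeds continuously into $L^p(\Omega)$ on a bounded domain. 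Both are standard; your write-up correctly handles the endpoint $p=1$ (where $q=\infty$), and the passage from relative compactness in $C(\overline\Omega)$ to relative compactness in $L^p(\Omega)$ is exactly where the boundedness of $\Omega$ is used, which is the essential hypothesis.
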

	
	\begin{proof}
		Cross-correlations are similar to convolutions, and convolution operators are compact when acting on bounded domains. A proof of this can be found in Corollary 4.28 of~\cite{brezis_functional_2010}, which relies on the Frechet-Kolmogorov-Riesz Theorem on $L^p$-compactness. A similar approach is presented in Theorems 2.4 and 2.5 of~\cite{gripenberg_volterra_1990}, using the Arzela-Ascoli Theorem. Related results are given in Theorem 3 of~\cite{graham_compactness_1979}, or Theorem 4.1 of~\cite{lan_sufficient_2022}.
	\end{proof}
	The following well-known fact states that the set of compact operators forms a two-sided ideal in the space of bounded linear operators (see~\cite{murphy_c-algebras_1990}).
	\begin{lemma}\label{prop: ideal}
		The product of a bounded operator with a compact operator is always compact. Additionally, finite sums of compact operators are again compact.
	\end{lemma}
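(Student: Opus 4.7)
The plan is to verify both claims using the sequential characterization of compactness: a bounded linear operator $T$ between Banach spaces is compact if and only if every bounded sequence $\{x_n\}$ admits a subsequence $\{x_{n_k}\}$ such that $\{Tx_{n_k}\}$ converges in norm. Both assertions are classical, but I would include a short self-contained argument rather than simply citing~\cite{murphy_c-algebras_1990}, since it is quick.

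For the first claim, suppose $K$ is compact and $B$ is bounded, and let $\{x_n\}$ be a bounded sequence. To show $BK$ is compact, extract a subsequence $\{x_{n_k}\}$ such that $Kx_{n_k}\to y$ in norm, which is possible by compactness of $K$. Continuity of $B$ then yields $BKx_{n_k}\to By$, establishing compactness of $BK$. For $KB$, observe first that $\{Bx_n\}$ is bounded because $B$ is bounded, so compactness of $K$ supplies a subsequence along which $K(Bx_{n_k})$ converges, proving $KB$ is compact.

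For the second claim, it suffices by induction to handle the sum of two compact operators $K_1$ and $K_2$. Given a bounded sequence $\{x_n\}$, first apply compactness of $K_1$ to extract a subsequence $\{x_{n_k}\}$ with $K_1 x_{n_k}$ convergent. The subsequence $\{x_{n_k}\}$ is itself bounded, so compactness of $K_2$ provides a further subsequence $\{x_{n_{k_j}}\}$ along which $K_2 x_{n_{k_j}}$ converges. On this common sub-subsequence, $(K_1+K_2)x_{n_{k_j}}$ is the sum of two convergent sequences, hence converges. The result for finite sums follows by iterating this diagonal extraction.

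There is no substantive obstacle here; the only thing to be mildly careful about is the order of extractions in the sum argument (one must pass to a common subsequence), and the direction of composition in the product argument (using continuity of $B$ on the output side versus boundedness of $B$ on the input side). Both points are handled by the pattern above.
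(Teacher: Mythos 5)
Your proof is correct. The paper does not actually prove this lemma---it simply cites \cite{murphy_c-algebras_1990} for the standard fact that the compact operators form a two-sided ideal in the bounded operators---so your self-contained argument via the sequential characterization of compactness fills in exactly what the paper leaves to the reference. Both halves are handled properly: you correctly distinguish the two compositions ($BK$ uses continuity of $B$ after extracting a convergent image sequence under $K$, while $KB$ uses boundedness of $B$ to keep the input sequence bounded before applying $K$), which matters because the paper invokes the ideal property in both orders (e.g., $E\D_\delta$, $P\overline{\D_\delta}$, $\iota_k\overline{\D_{\delta,k}}$, and $\overline{\D_{\delta,k}}P_k$). The nested-subsequence argument for the sum of two compact operators, followed by induction for finite sums, is the standard and complete route. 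No gaps.
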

	
	We now prove Theorem~\ref{thm: cpt}.
	\begin{proof}
		For the forward direction, suppose that $\overline{\D_\delta}$ is compact. Then decompose $\overline{\D_\delta}$, as in equation~\eqref{eqt: decompose extended domain} and the subsequent discussion: $\overline{\D_\delta} = \overline{I_\delta} - M_{g}$.
		
		By Lemma~\ref{lem: cross cor are cpt}, $I_\delta$ is compact as an operator on $L^p\left(\Omega_\delta\right)$. By Lemma~\ref{prop: ideal}, $\overline{\D_\delta} - {EI_\delta}$ is also compact. Since $M_{g} = \overline{\D_\delta} -\overline{I_\delta}$, we see that $M_{g}$ must be compact. However, it is a fact from functional analysis that the only compact multiplication operator is the zero operator. A very general result of this form can be found in~\cite{takagi_compact_1992}. This means $g$ is almost everywhere zero, which is only possible if $\overline{\mu_\delta} = \int_{B_\delta(0)} \mu_\delta(z)\dif z =0$.
		
		Now, for the opposite implication. Suppose that $\mu_\delta$ is mean-free. Then the multiplication operator $M_{g}$ disappears. By Lemma~\ref{lem: cross cor are cpt}, $I_\delta$ is compact. Thus, $\overline{\D_\delta} = \overline{I_\delta} = EI_\delta$ is also compact by Lemma~\ref{prop: ideal}.
	\end{proof}
	
	\begin{remark}\label{rmk: extension is comapct}
		The operator $\D_\delta$ can be written $
		\D_\delta=P\overline{\D_\delta},$
		where $P$ is defined in Remark~\ref{rmk: isometric isomorphism}. By Lemma~\ref{prop: ideal}, this means that whenever $\overline{\D_\delta}$ is compact, so is $\D_\delta$. Alternatively, if $\D_\delta$ is compact, so is $\overline{\D_\delta} = E\D_\delta$. Thus, for any $p\in[1,\infty)$, $\overline{\D_\delta}$ is a compact operator $L^p\left(\Omega_\delta\right)\to L^p\left(\Omega_\delta\right)$ if and only if $\D_\delta:L^p\left(\Omega_\delta\right)\to L^p\left(\Omega\right)$ is compact.
	\end{remark}
	
	\begin{Prop}\label{prop: vect ops cpt}
		Let $X= L^p\left(\Omega_\delta\right)$ for some $p\in[1,\infty)$. Suppose that $\vec{\mu}_{\delta}\in L^1(\R^d)^m$ has support in the ball $\overline{B_\delta(0)}$. Then the following are equivalent:
		\begin{enumerate}
			\item Each component of $\vec{\mu}_\delta$ is mean-free.
			\item For each $k$, the nonlocal directional derivative $\overline{\D_{\delta,k}}$ is a compact operator.
			\item The operator $\G_\delta$ is compact.
		\end{enumerate}
		Additionally, if any of the three conditions above hold, then $\mathscr{A}_\delta: X\to X$ and $\mathscr D_\delta: X^m\to X$ are also compact operators.
	\end{Prop}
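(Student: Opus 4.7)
The plan is to establish the three equivalences by chaining them back to Theorem~\ref{thm: cpt}, which has already done the real work of characterizing compactness for a single kernel. The central observation is that $\mathscr{G}_\delta$, $\mathscr{A}_\delta$, and $\mathscr{D}_\delta$ are all finite linear combinations (or tuples) of the single-component operators $\overline{\D_{\delta,k}}$, so compactness transfers between them via Lemma~\ref{prop: ideal} together with standard facts about direct sums.

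First I would observe that (1) $\Leftrightarrow$ (2) is immediate: Theorem~\ref{thm: cpt} applied componentwise says that $\overline{\D_{\delta,k}}$ is compact on $L^p(\Omega_\delta)$ if and only if $\int_{B_\delta(0)}\mu_{\delta,k}(z)\dif z = 0$. Next, for (2) $\Rightarrow$ (3), I would note that $\mathscr{G}_\delta u = (c_1 \overline{\D_{\delta,1}} u,\dots, c_d \overline{\D_{\delta,d}} u)$ can be viewed as the operator $\bigoplus_{k=1}^{d} c_k \overline{\D_{\delta,k}}$ composed with the diagonal embedding $u\mapsto (u,\dots,u)$. Equivalently, $\mathscr{G}_\delta$ is a finite sum of the operators $u\mapsto c_k \overline{\D_{\delta,k}}u \otimes \hat{e}_k$, each of which is the composition of a bounded operator with a compact one, hence compact by Lemma~\ref{prop: ideal}. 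The finite sum is then compact by the same lemma.

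For (3) $\Rightarrow$ (2), I would use the bounded coordinate projections $\pi_k: X^d\to X$, $\pi_k(v_1,\dots,v_d) = v_k$, and observe that $c_k \overline{\D_{\delta,k}} = \pi_k \circ \mathscr{G}_\delta$. Since the composition of a bounded operator with a compact one is compact, each $\overline{\D_{\delta,k}}$ is compact (assuming $c_k\ne 0$; the case $c_k=0$ is trivial). This closes the cycle of equivalences.

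Finally, for the additional claims, $\mathscr{A}_\delta = \sum_{k=1}^{m} c_k \overline{\D_{\delta,k}}$ is a finite sum of compact operators, hence compact by Lemma~\ref{prop: ideal}. For $\mathscr{D}_\delta: X^m\to X$, one writes $\mathscr{D}_\delta \vec{u} = \sum_{k=1}^{m} c_k \overline{\D_{\delta,k}}(\pi_k \vec{u})$, which is again a finite sum of compositions of bounded and compact operators, thus compact. No real obstacle is expected here; the proof is essentially bookkeeping, and the only point requiring slight care is formulating (3) $\Rightarrow$ (2) in a way that isolates each component, which the coordinate projections handle cleanly.
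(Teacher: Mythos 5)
Your proof matches the paper's essentially step for step: (1)$\Leftrightarrow$(2) via Theorem~\ref{thm: cpt}, (2)$\Rightarrow$(3) via the ideal property applied to $\sum_k \iota_k\,\overline{\D_{\delta,k}}$ (your ``tensor with $\hat{e}_k$'' is exactly the paper's inclusion map $\iota_k$), (3)$\Rightarrow$(2) via coordinate projections, and the bonus compactness of $\mathscr{A}_\delta$ and $\mathscr{D}_\delta$ via finite sums of compact-composed-with-bounded operators. One small remark: you flag the possibility $c_k=0$ in (3)$\Rightarrow$(2) and call it ``trivial,'' but that is not quite right --- if $c_k=0$ then $\pi_k\circ\G_\delta=0$ gives no information about $\overline{\D_{\delta,k}}$, and the implication (3)$\Rightarrow$(2) would genuinely fail. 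The correct reading is that the proposition implicitly assumes all $c_k\ne 0$ (as does the paper's own proof, which silently divides by $c_k$ when recovering $\overline{\D_{\delta,k}}$ from $\G_\delta$); it is not that the degenerate case is harmless.
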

	
	\begin{proof}
		The equivalence of (1) and (2) is given by Theorem~\ref{thm: cpt}. It suffices to show that (2) and (3) are equivalent.
		
		We first prove that (2) implies (3). For each $k=1,\dots,d$, let $\iota_k: X\to X^m$ be the inclusion map $x\mapsto (0,\dots, x,\dots,0)$. Then each $\iota_k$ is a bounded linear operator. By Lemma~\ref{prop: ideal}, we see that $\iota_k \overline{\D_{\delta,k}}$ is compact, as it is a composition of a compact operator and a bounded operator.
		We can decompose the nonlocal gradient as ${\displaystyle{
				\mathscr G_\delta = \sum_{k=1}^m \iota_k \overline{\D_{\delta,k}}}}$, which is a finite sum of compact operators. Thus, $\G_\delta$ is compact.
		
		For the converse implication, suppose that $\G_\delta$ is compact. The directional derivative $\overline{\D_{\delta,k}}$ is a composition of $\G_\delta$ with a projection. So by the same reasoning as above, the directional derivatives are each compact.
		
		Finally, we verify $\mathscr A_\delta$ and $\mathscr D_\delta$ are also compact whenever Condition 2 holds.	Suppose that each $\overline{\D_{\delta,k}}: X\to X$ is compact. For each $k$, let $P_k: X^m\to X$ be the projection onto the $k$-th coordinate: $
		P_k (\vec v) = P_k\left( (v_1, v_2,\dots,v_m) \right) = v_k.$ Then, ${\displaystyle{
				\mathscr D_\delta u = \sum_{k=1}^m \overline{\D_{\delta,k}} P_k.}}$
		Each $\D_{\delta,k}$ is compact by assumption, and $P_k$ is a bounded linear map. Thus, we may apply Lemma~\ref{prop: ideal} once again to conclude that $\mathscr D_\delta$ is compact. Finally, Lemma~\ref{prop: ideal} implies that finite linear combinations are also compact. Therefore, $\mathscr A_\delta$ is compact operator from $X$ into itself.
	\end{proof}
	
	\subsection{A Connection to the Poincar\'e Inequality}\label{sect: poincare}
	The classical Poincar\'e inequality plays a crucial role in the analysis of PDEs and the calculus of variations. For example, it is related to the coercivity of the Dirichlet energy and eigenvalue problems. It used frequently in energy estimates to show well-posedness; nonlocal versions have been applied to establish nonlocal-to-local convergence for various boundary value problems. Since we have argued that $\D_\delta$ ``acts like a derivative," it is natural to ask whether $\D_\delta$ satisfies an analogous inequality. In the context of variational problems, this question is also related to compact embedding results needed for $\Gamma$-convergence of nonlocal functionals to their local counterparts~\cite{du__2023, han_compactness_2024}.
	
	Many versions of nonlocal Poincar\'e-type inequalities have been established over the past couple of decades. Regarding two-point gradients, we mention~\cite{ponce_estimate_2004, andreu-vaillo_nonlocal_2010, Du_Gunzberger, aksoylu_variational_2011,mengesha_nonlocal_2014, kassmann_solvability_2019}. We will focus on Poincar\'e-type inequalities for one-point nonlocal operators, such as $\D_\delta$, and we state the precise form of the inequality below.
	\begin{definition}\label{def: poincare}
		For a fixed $\delta>0$, a nonlocal operator $\mathscr T_\delta: L^p\left(\Omega_\delta\right)\to L^p\left(\Omega\right)^m$ satisfies a \textit{Poincar\'e inequality} if there is a constant $C$, depending only on $p$, $\delta$, and the domain $\Omega$, such that
		\[
		\norm{u}_{L^p\left(\Omega\right)} \le C \norm{\mathscr T_\delta u}_{L^p\left(\Omega\right)^m}, \ \ \ \forall u\in L^p_0(\dom_\delta).
		\]
	\end{definition}
	Inequalities of this type can be found in~\cite{du_stability_2018, foss2019nonlocal, lee_nonlocal_2020, bellido_non-local_2023, han_nonlocal_2023}. We are, however, unaware of a full characterization of when a Poincar\'e inequality exists for operators like $\D_\delta$. This condition is called \textit{bounded below} and is a strong form of injectivity. We do not yet have a full understanding of the invertibility of $\D_\delta$, or even what its zero set looks like. Throughout Section~\ref{sect: poincare}, we make some connections with the results of Section~\ref{sec: compact} and the question of whether $\D_\delta$ satisfies a Poincar\'e inequality. In particular, we provide some necessary conditions on $\mu_\delta$, and in Part II we explore some sufficient conditions for a nonlocal Poincar\'e inequality.
	
	In Theorem~\ref{thm: cpt}, we saw that many of the kernels $\mu_\delta$ used in practice lead to compact operators $\D_\delta$. Namely, if $\mu_\delta$ is antisymmetric, then $\D_\delta$ is compact. Intuitively, compact operators collapse large spaces into small ones: nonseparable Hilbert spaces are mapped to separable spaces, bounded sets are mapped to precompact sets, and the range of a compact operator is ``almost finite-dimensional" (any compact operators is the norm-limit of finite-rank operators). We therefore expect compact operators to lack injectivity. In fact, compact operators acting on an infinite-dimensional space are never invertible. We show in Corollary~\ref{coro: no poincare} that this intuition is correct: if $\D_\delta$ is compact, it cannot possibly satisfy a Poincar\'e inequality. This follows from the following more general result.
	
	\begin{Prop}\label{prop: inconsisten triad}
		Suppose $X_0$ is a closed subspace of $L^p\left(\Omega_\delta\right)$ and that $S:L^p\left(\Omega_\delta\right)\to L^p\left(\Omega_\delta\right)$ is bounded linear operator. The three conditions below are mutually incompatible; it is impossible for all three to hold simultaneously.
		\begin{enumerate}
			\item The operator $S: L^p\left(\Omega_\delta\right)\to L^p\left(\Omega_\delta\right)$ is compact.
			\item The operator $S$ is bounded below when restricted to $X_0$. That is, there is a constant $C$ depending only on the domain $\Omega$ and the power $p$ such that, for all $u\in X_0$,
			\[
			\norm{u}\le C \norm{S u}.
			\]
			\item The vector space $Y\defn S\left(X_0 \right)$ is infinite-dimensional.
		\end{enumerate}
	\end{Prop}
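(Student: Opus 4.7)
The plan is to derive a contradiction by showing that if (1) and (2) both hold, then $X_0$ must be finite-dimensional, whence $Y = S(X_0)$ is also finite-dimensional, contradicting (3). The mechanism is the classical Riesz characterization of finite-dimensional normed spaces via compactness of the closed unit ball.

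First, I would take an arbitrary bounded sequence $(u_n) \subseteq X_0$. Since $S$ is a compact operator on $L^p(\Omega_\delta)$ by (1), the sequence $(Su_n)$ has a subsequence $(Su_{n_k})$ that converges in $L^p(\Omega_\delta)$, hence is Cauchy. Applying the lower bound from (2) to the differences $u_{n_k} - u_{n_j} \in X_0$ gives
\[
\|u_{n_k} - u_{n_j}\|_{L^p(\Omega_\delta)} \le C\,\|S u_{n_k} - S u_{n_j}\|_{L^p(\Omega_\delta)},
\]
so $(u_{n_k})$ is Cauchy in $X_0$. Because $X_0$ is a closed subspace of a Banach space, it is itself complete, so $(u_{n_k})$ converges to some element of $X_0$. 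Thus every bounded sequence in $X_0$ has a convergent subsequence, which means the closed unit ball of $X_0$ is compact.

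By the Riesz lemma (a normed space whose closed unit ball is compact is finite-dimensional), $X_0$ must be finite-dimensional. Consequently, $Y = S(X_0)$ is the image of a finite-dimensional space under a linear map, hence is finite-dimensional, directly contradicting (3). This completes the proof.

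I do not expect any serious obstacle here: the argument is essentially the observation that a compact operator which is bounded below on a subspace must have finite-dimensional domain, a standard functional-analytic fact. The only subtlety worth noting is that completeness of $X_0$ is needed to pass from Cauchy to convergent, which is why the hypothesis that $X_0$ be a \emph{closed} subspace is essential.
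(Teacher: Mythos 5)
Your proof is correct and complete. Worth noting: the paper itself does not prove this proposition at all — the remark immediately following simply labels it a ``standard functional analysis fact'' and cites exercises and theorems in Bressan, Axler, and Conway. Your argument supplies the standard proof those references give: compactness of $S$ plus the lower bound forces every bounded sequence in $X_0$ to have a convergent subsequence, so the closed unit ball of $X_0$ is sequentially compact, hence compact, and Riesz's lemma then yields $\dim X_0 < \infty$, contradicting (3) since $Y = S(X_0)$ has dimension at most $\dim X_0$. You correctly flag the role of closedness of $X_0$ in guaranteeing completeness, which is exactly where the hypothesis is used. No gaps.
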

	
	\begin{remark}
		This is a standard functional analysis fact. Related statements can be found as Exercises 5, 9, and 17 in Chapter 4 of~\cite{bressan_lecture_2012}, Theorems 10.74-10.77 of~\cite{axler_measure_2020}, and Proposition X.1.15 of~\cite{conway_course_2019}.
	\end{remark}

	Applying this Proposition to our setting yields an important corollary. It states that whenever $\mu_\delta$ is antisymmetric and integrable, $\D_\delta$ cannot satisfy a Poincar\'e inequality as in Definition~\ref{def: poincare}. This corollary generalizes the analysis of ``instabilities" and ``zero-energy modes" that arise from integrable, antisymmetric kernels. In fact, our Corollary~\ref{coro: no poincare} can be seen as an extension of Remark 3 of~\cite{huang_stability_2022} and Section 5 of~\cite{du_stability_2018}. These works use Fourier analysis and the Riemann-Lebesgue Lemma to argue that, for $p=2$ on a one-dimensional periodic domain, the nonlocal derivative with antisymmetriic integrable kernel cannot be bounded below.

	\begin{coro}\label{coro: no poincare}
		Let $\mathscr T_\delta$ denote any nonlocal operator defined in Definition~\ref{def: div and grad} such that each component of $\vec{\mu}_{\delta}$ is mean-free. Then $\mathscr T_\delta$ is not bounded below on any closed infinite-dimensional subspace of its domain. In particular, if $\mu_\delta$ is mean-free, then the corresponding operator $\D_\delta$ does not satisfy the Poincar\'e inequality on the space $L^p_0(\dom_\delta)$.
	\end{coro}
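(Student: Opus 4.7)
The plan is to deduce the corollary directly from Proposition~\ref{prop: inconsisten triad} together with the compactness results of Section~\ref{sec: compact}. Because a bounded-below operator restricted to an infinite-dimensional closed subspace automatically has an infinite-dimensional image, and because $\mathscr{T}_\delta$ is compact under the mean-free assumption, all three conditions of Proposition~\ref{prop: inconsisten triad} would hold simultaneously---a contradiction.

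In detail, I would first invoke Proposition~\ref{prop: vect ops cpt} (or, in the scalar case, Theorem~\ref{thm: cpt} together with Remark~\ref{rmk: extension is comapct}) to conclude that $\mathscr{T}_\delta$ is a compact operator whenever every component of $\vec{\mu}_\delta$ is mean-free. For the vector-valued operators $\mathscr{G}_\delta$ and $\mathscr{D}_\delta$ whose target spaces are $L^p(\Omega_\delta)^m$ rather than $L^p(\Omega_\delta)$, I would note that Proposition~\ref{prop: inconsisten triad} generalizes verbatim to bounded linear operators between arbitrary Banach spaces; the proof of the triad uses only the abstract Riesz compactness characterization of finite-dimensionality, so the target space plays no role.

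Next, assume toward contradiction that $\mathscr{T}_\delta$ is bounded below on some closed infinite-dimensional subspace $X_0$ of its domain, with constant $C$. Then $\mathscr{T}_\delta|_{X_0}$ is injective with closed range, so it is a linear homeomorphism onto $Y \defn \mathscr{T}_\delta(X_0)$. Since $X_0$ is infinite-dimensional, $Y$ must be infinite-dimensional as well. But now conditions (1), (2), (3) of Proposition~\ref{prop: inconsisten triad} all hold, which is the desired contradiction. This proves the first assertion.

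For the Poincaré statement, I would take $X_0 = L^p_0(\Omega_\delta)$, which is a closed subspace of $L^p(\Omega_\delta)$ by Definition~\ref{def: zero on collar} and is isometrically isomorphic to $L^p(\Omega)$ via the extension-by-zero operator $E$ of Definition~\ref{rmk: isometric isomorphism}. Since $\Omega$ is an open bounded subset of $\R^d$, it has positive Lebesgue measure, so $L^p(\Omega)$ (and hence $L^p_0(\Omega_\delta)$) is infinite-dimensional. Applying the first part of the corollary with this choice of $X_0$ rules out any inequality of the form $\|u\|_{L^p(\Omega)} \le C\|\D_\delta u\|_{L^p(\Omega)}$ on $L^p_0(\Omega_\delta)$. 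The main conceptual point---rather than a technical obstacle---is simply recognizing that ``bounded below'' on an infinite-dimensional subspace forces the image to be infinite-dimensional, which is precisely what is forbidden for compact operators.
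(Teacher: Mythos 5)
Your proposal is correct and follows essentially the same route as the paper: invoke compactness of $\mathscr{T}_\delta$ from the mean-free hypothesis via Theorem~\ref{thm: cpt}/Proposition~\ref{prop: vect ops cpt}, then apply Proposition~\ref{prop: inconsisten triad} and derive a contradiction from a hypothetical bounded-below inequality on an infinite-dimensional closed subspace. The only cosmetic difference is that the paper argues the image must be finite-dimensional (since conditions 1 and 2 hold) and then uses the induced bijection $\mathscr{T}_\delta|_{X_0}$ to conclude $X_0$ is finite-dimensional, whereas you argue directly that all three conditions would hold and derive the contradiction; your explicit remark that Proposition~\ref{prop: inconsisten triad} applies unchanged when the codomain is $L^p(\Omega_\delta)^m$ is a sensible clarification that the paper leaves implicit.
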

	
	\begin{proof}
		Suppose that $X_0$ is a closed subspace of the domain of $\mathscr T_\delta$ such that, for all $u\in X_0$, we have
		\begin{equation}\label{E: Poincare}
			\norm{u} \le C \norm{\mathscr T_\delta u},
		\end{equation}
		where $C$ is independent of $u$. Then $\mathscr T_\delta$ satisfies the first two conditions of Proposition~\ref{prop: inconsisten triad}, meaning the image $\mathscr T_\delta(X_0)$ must be finite-dimensional. The operator $\mathscr T_\delta |_{X_0}: X_0\to \mathscr T_\delta(X_0)$, however, is a bijection: it is sujective by definition and injective by the inequality~\eqref{E: Poincare}. As $\mathscr T_\delta|_{X_0}: X_0\to \mathscr T_\delta(X_0)$ is a linear bijection into a finite-dimensional space, the domain $X_0$ must also be finite-dimensional. Thus, any space where the inequality~\eqref{E: Poincare} holds is finite-dimensional.
		
		Now, for the second part of the corollary. Thanks to Theorem~\ref{thm: cpt}, if $\mu_\delta$ is mean-free then it must be compact. As $L^p_0(\dom_\delta)$ is an infinite-dimensional closed subspace of $L^p\left(\Omega_\delta\right)$, the first part of this corollary implies that there is no valid Poincar\'e inequality on this space.
	\end{proof}

	\section{Comments on Symmetry Assumptions}\label{sect: symm}
	
	Much of the recent literature on convolution-type integral operators assume the kernel is either purely antisymmetric or purely symmetric. With a closer look at the Taylor series argument in Section~\ref{sect: motiv}, we see that the antisymmetric kernels isolate the odd-order derivatives of a smooth function, while symmetric kernels isolate the even-order derivatives. Antisymmetric kernels lead to skew-symmetric integral operators, while symmetric kernels produce self-adjoint operators. This work contributes to the study of nonlocal operators without symmetry assumptions, revealing the interplay between the symmetric and antisymmetric components of the kernel.
	
	We focus on families of integral operators $\{\D_\delta\}_{\delta>0}$, with integrable kernels, that recover the properties of a classical directional derivative as $\delta\to0$. In Theorem~\ref{thm: convergence to classical derivative}, we establish this asymptotic behavior for general asymmetric kernels. This nonlocal-to-local convergence, under the assumption of antisymmetry, has been extensively studied in many works~\cite{Du_Gunzberger,mengesha_localization_2015,shankar_nonlocal_2016, mengesha_characterization_2016,du_robust_2016, shieh_new_2018,delia_helmholtz-hodge_2020, delia_towards_2021, haar_new_2022, delia_connections_2022, cueto_variational_2023}. In Theorem~\ref{thm: convergence to classical derivative}, we see that the primary requirements are $L^1$-boundededness and the convergence of first-moments.

	While $\{\D_\delta\}_{\delta>0}$ converges, in some sense, to a first-order differential operator, the nonlocal derivative has some distinguishing properties. For example, unlike a differential operator, $\D_\delta$ is a continuous operator from $L^p$ into $L^p$. If the kernel is purely antisymmetric, or even just mean-free, the distinctions are more pronounced due to its convolutional structure. Proposition~\ref{prop: bonus regularity}, shows that $\D_\delta u$ directly inherits the differentiability of the kernel, and therefore maps into a proper subspace of $L^p$. Moreover, from Theorem~\ref{thm: cpt}, we see that the operator $\D_\delta$ is not just continuous but actually compact into $L^p$. This severely limits the subspaces of $L^p(\dom_\delta)$ on which a Poincar\'e-type inequality can hold. As defined in Definition~\ref{def: poincare}, a Poincar\'e-type inequality is equivalent to the operator having a bounded left-inverse on $L^p_0(\dom_\delta)$. Proposition~\ref{prop: inconsisten triad} implies nonlocal derivatives with antisymmetric kernels lack a left-inverse except when restricted to finite-dimensional subspace of $L^p(\Omega_\delta)$. In particular, $\D_\delta$ cannot be bounded below on $L^p_0(\dom_\delta)$ or the space of mean-free function, as these are each infinite-dimensional (see Corollary~\ref{coro: no poincare}). Similar observations have been made in~\cite{du_stability_2018, lee_nonlocal_2020, han_nonlocal_2023}. As noted in Section 4.1, the results of this paper generalize ideas from~\cite{huang_stability_2022} and~\cite{du_stability_2018}.

	It has been observed that models based on antisymmetric kernels can lead to issues in numerical simulations. In the context of nonlocal convection problems, standard Galerkin methods and central difference schemes are unstable and lead to ``unphysical oscillations"~\cite{tian_nonlocal_2015, leng_petrov-galerkin_2022}. One solution is to only integrate over a portion of the ball $\overline{B_\delta(0)}$, thereby breaking the symmetry of the kernel. This is analogous to upwinding schemes employed in local problems, except the upwinding is built into the equation itself through the nonlocal operator. Additionally, this symmetry-breaking is related to ensuring that the model is globally mass-conserving and satisfies a maximum principle. For additional discussion, see also~\cite{tian_conservative_2017, du_nonlocal_2017, yu_asymptotically_2022, yu_numerical_2022, safarik_mathematical_2024}. A related issue arises in numerical solutions for state-based peridynamics, where again nonphysical oscillations can develop in simulations. It was discovered that this was a feature of the model itself since the approximate deformation gradient $\overline{\mathbf F}$ has admissible, nontrivial deformations in its null space. The potential energy is inherently insensitive to these zero-energy modes, resulting in instabilities in the model and simulations~\cite{silling_stability_2017} (see Example~\ref{ex: peridynamic correspondence}).
	
	We also briefly mention the importance of symmetry assumptions in compact-embedding type results. These play a key role in the analysis of nonlocal models, and there have been a wide variety of pre-compactness results in the literature; see, for example, Theorem 5.1 of~\cite{mengesha_nonlocal_2014}, Proposition 4.2 of~\cite{mengesha_variational_2015}, Theorem 6.11 of~\cite{andreu-vaillo_nonlocal_2010}, Theorem 4 of~\cite{bourgain_another_2001}, Proposition 4.2 of~\cite{ponce_estimate_2004}, Lemmas 2.2 and 2.3 of~\cite{mengesha_analysis_2013}. In recent work~\cite{du__2023, han_compactness_2024}, these results were extended to the case where the kernel $\rho$ is not necessarily radially symmetric.
	
	On the other hand, there are a number of contexts where the symmetry of the kernel is broken by multiplying by a characteristic function on a half space.~\cite{du_analysis_2017} considers a nonlocal-in-time derivative where the kernel is supported in a half-space. Similar one-sided nonlocal-in-time problems are discussed throughout~\cite{gripenberg_volterra_1990}, as well as Appendix C of~\cite{gal_fractional--time_2020} and Section 3.3 of~\cite{precup_methods_2002}. One-sided kernels can occur in spatial nonlocal derivatives as well. Examples include the traffic models in~\cite{huang_mathematical_2022, du_space-time_2023, huang_incorporating_2023} as well as nonlocal models in the context of Smoothed Particle Hydrodynamics~\cite{lee_asymptotically_2019, du_mathematics_2020}. A more in-depth discussion of nonlocal gradients with kernels supported on a half-ball can be found in~\cite{lee_nonlocal_2020, han_nonlocal_2023}.

	\section{Conclusion}
	This paper lays out some preliminaries needed for the study of variational problems involving first-order nonlocal derivatives without symmetry assumptions. We have generalized existing results on the convergence of nonlocal derivatives to their classical counterparts (Theorem~\ref{thm: convergence to classical derivative}), including connections to peridynamics (Example~\ref{ex: peridynamic correspondence} and Section \ref{sect: symm}). We have also provided a characterization for the adjoint of the nonlocal derivative (Theorem~\ref{thm: adjoint}) as well as conditions under which it is a compact operator (Theorem~\ref{thm: cpt} and Proposition~\ref{prop: vect ops cpt}). This compactness precludes invertibility and interesting Poincar\'{e} inequalitiies (Corollary~\ref{coro: no poincare}). Moreover, the large null space associated with compact operators underlies certain difficulties in nonlocal modeling (Section~\ref{sect: symm}) and numerical instabilities. Applications to existence and regularity of solutions for variational problems will be taken up in Part II of this work.
	
	\section{Acknowledgements}
	We would like to thank Javier Cueto Garcia for many helpful discussions about the nonlocal framework, and Christopher Schafhauser, Mikkel Munkholm, and David Pitts for insight into operator theoretic questions.


\end{document}